\documentclass[12pt]{article}

\usepackage{graphicx}
\usepackage{amssymb}
\usepackage{amsthm}
\usepackage{amsmath}
\usepackage{mathtools}
\usepackage{tikz}
\usepackage{pgfplots}
\pgfplotsset{compat=1.18}
\usepackage[hidelinks]{hyperref}
\usepackage{stmaryrd}
\usepackage{fullpage}
\usepackage{ifthen}
\usepackage{cite}
\usepackage{rotating}

\usepackage{fontawesome5}
\usepackage{subcaption}

\tikzset{big_vertex/.style={inner sep=8pt, outer sep=0pt, circle, fill=gray}} 
\tikzset{big_vertex_center/.style={inner sep=6pt, outer sep=0pt, circle, fill=gray}} 
\tikzset{big_turan_vertex/.style={inner sep=8pt, outer sep=0pt, circle, draw=black, fill=white}} 
\tikzset{big_turan_vertex_center/.style={inner sep=6pt, outer sep=0pt, circle, draw=black, fill=white}}

\newcommand{\vc}[1]{\ensuremath{\vcenter{\hbox{#1}}}}

\tikzset{flag_pic/.style={scale=1}}  
\tikzset{unlabeled_vertex/.style={inner sep=1.7pt, outer sep=0pt, circle, fill}}
\tikzset{labeled_vertex/.style={inner sep=3pt, outer sep=0pt, rectangle, fill=white, draw=black}}
\tikzset{edge_color0/.style={color=black,line width=1.2pt,opacity=0.5,dashed}}
\tikzset{edge_color1/.style={color=red,  line width=1.2pt,opacity=0}} 
\tikzset{edge_color2/.style={color=black, line width=1.2pt,opacity=1}}
\tikzset{edge_color3/.style={color=green!80!black,line width=1.2pt}}
\tikzset{edge_color4/.style={color=orange, line width=1.2pt}}
\tikzset{edge_color5/.style={color=red,  line width=1.2pt,dotted}}
\tikzset{edge_color6/.style={color=blue, line width=1.2pt,dotted}}
\tikzset{edge_color7/.style={color=green, line width=1.2pt,dotted}}
\tikzset{edge_color8/.style={color=gray, line width=1.2pt}}
\tikzset{edge_color9/.style={color=gray, dotted, line width=1.2pt}}
\tikzset{edge_color10/.style={color=gray, dashed, line width=1.2pt}}
\tikzset{edge_color11/.style={color=pink, dashed, line width=1.2pt}}
\tikzset{edge_f/.style={-latex}}
\tikzset{edge_b/.style={latex-}}
\tikzset{edge_color1f/.style={edge_f,edge_color1}}
\tikzset{edge_color1b/.style={edge_b,edge_color1}}
\tikzset{edge_color2f/.style={edge_f,edge_color2}}
\tikzset{edge_color2b/.style={edge_b,edge_color2}}
\tikzset{edge_color3f/.style={edge_f,edge_color3}}
\tikzset{edge_color3b/.style={edge_b,edge_color3}}
\tikzset{edge_color4f/.style={edge_f,edge_color4}}
\tikzset{edge_color4b/.style={edge_b,edge_color4}}
\tikzset{edge_color5f/.style={edge_f,edge_color5}}
\tikzset{edge_color5b/.style={edge_b,edge_color5}}
\tikzset{edge_color6f/.style={edge_f,edge_color6}}
\tikzset{edge_color6b/.style={edge_b,edge_color6}}
\tikzset{edge_color7f/.style={edge_f,edge_color7}}
\tikzset{edge_color7b/.style={edge_b,edge_color7}}
\tikzset{edge_color8f/.style={edge_f,edge_color8}}
\tikzset{edge_color8b/.style={edge_b,edge_color8}}
\tikzset{edge_colorroot/.style={color=red, line width=1.7pt}}
\tikzset{edge_thin/.style={color=black}}
\tikzset{edge_hidden/.style={color=black,dotted,opacity=0}}
\tikzset{vertex/.style={inner sep=1.7pt, outer sep=0pt, circle}}
\tikzset{vertex_color0/.style={inner sep=1.7pt, outer sep=0pt, draw, circle, fill=white}}
\tikzset{vertex_color1/.style={inner sep=1.7pt, outer sep=0pt, draw, circle, fill=red!30!white}}
\tikzset{vertex_color2/.style={inner sep=1.7pt, outer sep=0pt, draw, circle, fill=blue!30!white}}
\tikzset{vertex_color3/.style={inner sep=1.7pt, outer sep=0pt, draw, circle, fill=green}}
\tikzset{vertex_color4/.style={inner sep=1.7pt, outer sep=0pt, draw, circle, fill=yellow}}
\tikzset{vertex_color5/.style={inner sep=1.7pt, outer sep=0pt, draw, circle, fill=gray!30!white}}
\tikzset{vertex_color6/.style={inner sep=1.7pt, outer sep=0pt, draw, circle, fill=gray,label=below:{$6$}}}
\tikzset{vertex_color7/.style={inner sep=1.7pt, outer sep=0pt, draw, circle, fill=gray,label=below:{$7$}}}
\tikzset{vertex_color8/.style={inner sep=1.7pt, outer sep=0pt, draw, circle, fill=gray,label=below:{$8$}}}
\tikzset{vertex_color9/.style={inner sep=1.7pt, outer sep=0pt, draw, circle, fill=gray,label=below:{$9$}}}
\tikzset{vertex_color10/.style={inner sep=1.7pt, outer sep=0pt, draw, circle, fill=gray,label=below:{$10$}}}
\tikzset{vertex_color11/.style={inner sep=1.7pt, outer sep=0pt, draw, circle, fill=gray,label=below:{$11$}}}
\tikzset{vertex_color12/.style={inner sep=1.7pt, outer sep=0pt, draw, circle, fill=gray,label=below:{$12$}}}
\tikzset{vertex_color13/.style={inner sep=1.7pt, outer sep=0pt, draw, circle, fill=gray,label=below:{$13$}}}
\tikzset{vertex_color14/.style={inner sep=1.7pt, outer sep=0pt, draw, circle, fill=gray,label=below:{$14$}}}
\tikzset{labeled_vertex_color0/.style={inner sep=3pt, outer sep=0pt, draw, rectangle, fill=white}}
\tikzset{labeled_vertex_color1/.style={inner sep=3pt, outer sep=0pt, draw, rectangle, fill=red!30!white}}
\tikzset{labeled_vertex_color2/.style={inner sep=3pt, outer sep=0pt, draw, rectangle, fill=blue!30!white}}
\tikzset{labeled_vertex_color3/.style={inner sep=3pt, outer sep=0pt, draw, rectangle, fill=green}}
\tikzset{labeled_vertex_color4/.style={inner sep=3pt, outer sep=0pt, draw, rectangle, fill=yellow}}
\tikzset{labeled_vertex_color5/.style={inner sep=3pt, outer sep=0pt, draw, rectangle, fill=gray!30!white}}
\tikzset{labeled_vertex_color6/.style={inner sep=3pt, outer sep=0pt, draw, rectangle, fill=gray,label=below:{$6$}}}
\tikzset{labeled_vertex_color7/.style={inner sep=3pt, outer sep=0pt, draw, rectangle, fill=gray,label=below:{$7$}}}
\tikzset{labeled_vertex_color8/.style={inner sep=3pt, outer sep=0pt, draw, rectangle, fill=gray,label=below:{$8$}}}
\tikzset{labeled_vertex_color9/.style={inner sep=3pt, outer sep=0pt, draw, rectangle, fill=gray,label=below:{$9$}}}
\tikzset{text_color0/.style={color=black}}
\tikzset{text_color1/.style={color=red}}
\tikzset{text_color2/.style={color=blue}}
\tikzset{text_color3/.style={color=green!70!black}}
\tikzset{text_color4/.style={color=orange}}
\tikzset{text_color5/.style={color=gray}}

\def\outercycle#1#2{
\pgfmathtruncatemacro{\plusone}{#1+1}
\pgfmathtruncatemacro{\zeroshift}{270 - (#2-1)*360/#1/2 }
\draw  \foreach \x in {0,1,...,#1}{(\zeroshift+\x*360/#1:1) node[vertex](x\x){}};
}
    
\def\labelvertex#1{\pgfmathtruncatemacro{\vertexlabel}{#1+1 } \draw (x#1) node{\color{black}\tiny\vertexlabel}; }

\usepackage{listofitems}
\tikzset{vertex_u/.style={unlabeled_vertex}}
\tikzset{vertex_l/.style={labeled_vertex}}
\newcommand{\Fnv}[2]{ 
\ifnum#2<#1 \draw (x#2) node[vertex_l]{}; \labelvertex{#2}  
\else  \draw (x#2) node[vertex_u]{}; \fi 
}
\newcommand{\Fne}[3]{
\draw[edge_color#3] (x#1)--(x#2); 
}
\newcounter{Fneid} 
\newcommand{\Fe}[3]{
\ifnum#1=1
  \vc{\begin{tikzpicture}[scale=0.4]\outercycle{1}{2}
  \Fnv{#2}{0}
  \end{tikzpicture}}
\else
\setsepchar{ }
\readlist\elabel{#3}
\pgfmathtruncatemacro{\vertexloop}{#1-1}
\pgfmathtruncatemacro{\vertexloopi}{#1-2}
\pgfmathtruncatemacro{\expectededges}{#1*(#1-1)/2}
\ifnum\elabellen=\expectededges%
\def\cycleshift{2}
\ifnum#1=2\def\cycleshift{1}\fi
\ifnum#1=3\ifnum#2=1\def\cycleshift{1}\fi\fi
\def\Fnscale{0.4}\ifnum#1>4\def\Fnscale{0.45}\fi\ifnum#1>5\def\Fnscale{0.55}\fi\ifnum#1>7\def\Fnscale{0.65}\fi
\vc{\begin{tikzpicture}[scale=\Fnscale]
          \outercycle{#1}{\cycleshift}
          \setcounter{Fneid}{1}         
          \foreach\i in {0,...,\vertexloopi}{   
          \pgfmathtruncatemacro{\jfrom}{\i+1}
          \foreach\j in {\jfrom,...,\vertexloop}{
          \edef\eID{\arabic{Fneid}}
          \edef\eij{\elabel[\eID]}         
            \Fne\i\j{\eij}              
	    \stepcounter{Fneid}
          }}
          \foreach\i in {0,...,\vertexloop}{\Fnv{#2}{\i}  
          }
          \end{tikzpicture}}%
\else
   #1 vertices need \expectededges{} edges but got \elabellen edges.
\fi
\fi
}

\newcommand{\Fuu}[1]{
\,\vc{\begin{tikzpicture}[scale=0.3]\outercycle{2}{1}
\draw[edge_color#1] (x0)--(x1);  
\draw (x0) node[unlabeled_vertex]{};\draw (x1) node[unlabeled_vertex]{};
\labelvertex0
\end{tikzpicture}}
\,
}

\newcommand{\Fuuu}[3]{
\vc{\begin{tikzpicture}[scale=0.4]\outercycle{3}{1}
\draw[edge_color#1] (x0)--(x1);\draw[edge_color#2] (x0)--(x2);  \draw[edge_color#3] (x1)--(x2);    
\draw (x0) node[unlabeled_vertex]{};\draw (x1) node[unlabeled_vertex]{};\draw (x2) node[unlabeled_vertex]{};
\labelvertex0
\end{tikzpicture}}}

\newcommand{\FfourEdges}[6]{
\draw[edge_color#1] (x0)--(x1);\draw[edge_color#2] (x0)--(x2);\draw[edge_color#3] (x0)--(x3);  \draw[edge_color#4] (x1)--(x2);\draw[edge_color#5] (x1)--(x3);  \draw[edge_color#6] (x2)--(x3);
}
\newcommand{\Ffour}[5]{
\vc{\begin{tikzpicture}[scale=0.4]\outercycle{4}{2}
\FfourEdges#5
\draw (x0) node[vertex_#1]{};\draw (x1) node[vertex_#2]{};\draw (x2) node[vertex_#3]{};\draw (x3) node[vertex_#4]{};
\ifthenelse{\equal{#1}{l}}{\labelvertex{0}}{}%
\ifthenelse{\equal{#2}{l}}{\labelvertex{1}}{}%
\ifthenelse{\equal{#3}{l}}{\labelvertex{2}}{}%
\ifthenelse{\equal{#4}{l}}{\labelvertex{3}}{}%
\end{tikzpicture}}
}

\newcommand{\Flluu}[6]{\Ffour{l}{l}{u}{u}{#1#2#3#4#5#6}}
\newcommand{\Flllu}[6]{\Ffour{l}{l}{l}{u}{#1#2#3#4#5#6}}

\newcommand{\Ffive}[6]{
\vc{\begin{tikzpicture}[scale=0.4]\outercycle{5}{4}
#6
\draw (x0) node[vertex_#1]{};
\draw (x1) node[vertex_#2]{};
\draw (x2) node[vertex_#3]{};
\draw (x3) node[vertex_#4]{};
\draw (x4) node[vertex_#5]{};
\ifthenelse{\equal{#1}{l}}{\labelvertex{0}}{}%
\ifthenelse{\equal{#2}{l}}{\labelvertex{1}}{}%
\ifthenelse{\equal{#3}{l}}{\labelvertex{2}}{}%
\ifthenelse{\equal{#4}{l}}{\labelvertex{3}}{}%
\ifthenelse{\equal{#5}{l}}{\labelvertex{4}}{}%
\end{tikzpicture}}
}

\usepackage{xstring}

\newcommand{\WAwl}[2]{%
  \begingroup
  \edef\temp{\detokenize{#1}}%
  \StrSubstitute{\temp}{+}{\%2B}[\temp]%
  \StrSubstitute{\temp}{\\}{\%5C}[\temp]%
  \href{https://www.wolframalpha.com/input?i=\temp}{#2}%
  \endgroup
}

\newtheorem{theorem}{Theorem}

\newtheorem{lemma}[theorem]{Lemma}

\newtheorem{problem}[theorem]{Problem}

\newtheorem{conj}[theorem]{Conjecture}

\newcommand{\pind}{p}

\title{Local maximum of inducibility profiles}
\author{
J\'ozsef Balogh\thanks{Department of Mathematics, University of Illinois Urbana-Champaign, Urbana, IL, USA, and Extremal Combinatorics and Probability Group (ECOPRO), Institute for Basic Science (IBS), Daejeon, South Korea. Email: \texttt{jobal@illinois.edu}. Supported by NSF grants RTG DMS-1937241, FRG DMS-2152488, 
UIUC  Campus Research Board Award RB26026,
the  Simons Collaboration grant SFI-MPS-TSM-00013107, and the Institute for Basic Science (IBS-R029-C4).}
\and
Bernard Lidick\'{y}\thanks{Department of Mathematics, Iowa State University, Ames, IA. E-mail: {\tt lidicky@iastate.edu}. Research of this author is supported in part by NSF FRG DMS-2152490, the Simons Foundation TSM-00013439 and Scott Hanna Professorship.}
\and
Haoran Luo\thanks{Department of Mathematics, Statistics, and Computer Science, University of Illinois Chicago, IL USA. E-mail: {\tt haoranl8@uic.edu}. Research was partially performed while Luo was at the University of Illinois Urbana-Champaign.} 
}
\date{\today}

\begin{document}

\maketitle

\begin{abstract}
 For a graph $G$ and $e\in [0,1]$,
denote by $I_G(e)$ the supremum of induced density of $G$ over $n$-vertex graphs with edge density $e$ as $n$ goes to infinity.
Liu, Mubayi and Reiher asked if there exists a graph $G$, where  $I_G(e)$ has a non-trivial local maximum.

In this paper, we answer this problem affirmatively. We first show that $I_{K_{2,2,1}}(e)$ has at least two local maxima in $(0,1)$. Part of this proof is using flag algebras.
Additionally, we determine 
$I_{K_{2,2,1}}(e)$, when $e=(k-1)/k$ for every integer $k\ge 3.$

We also prove that $I_{K_t^-}(e)$ has a non-global local maximum  for every $t\in\{5,8,11,\ldots,$ $74\}$. The proof combines a symmetrization theorem of Schelp and Thomason with Reiher's clique density theorem.
\end{abstract}

\section{Introduction}

For graphs $G$ and $H$ denote by $I(H,G)$ the number of induced subgraphs of $G$ isomorphic to $H$. The \emph{density} of $H$ in $G$ is 
\[
p(H,G) \coloneqq   \frac{I(H,G)}{\binom{|V(G)|}{|V(H)|}}.
\]
For a graph $H$ and $e \in [0,1]$ let
\begin{align*}
I_H(e) = \limsup_{n\to\infty} \max\{p(H,G) : G \in n\text{-vertex  graphs, } p(K_2,G)=e+o(1)\},\\
i_H(e) = \liminf_{n\to\infty} \min\{p(H,G) : G \in n\text{-vertex  graphs, } p(K_2,G)=e+o(1)\}.
\end{align*}

The maximum of $I_H(e)$ is  the \emph{inducibility} of $H$.
The study of inducibility was initiated by Pippenger and Golumbic~\cite{Pippenger1975} in the 1970s. 
The invention of flag algebras by Razborov~\cite{Razborov} had a major influence on extremal combinatorics, in particular, 
flag algebras have been used to study inducibility as well as in various other settings~\cite{chen2025,bodnar2025,EvenZohar2014,bodnar2025exactinducibility,MR4624284,brosch2024gettingrootproblemsums,2023_KiemPokuttaSpiegel_4colorramsey,Choi2020,Bennett2019,Boyk2022,FalgasRavry2023}.
In this paper we assume the reader is somewhat  familiar with flag algebras.

Liu, Mubayi and  Reiher~\cite{Liu2023feasible} initiated the systematic investigation of $I_H(e)$ and $i_H(e)$.
The \emph{feasible region} $\Omega_{ind}(H)$ of a graph $H$ is the collection of all points $(x,y) \in [0,1]^2$ such that there exists a sequence of graphs of increasing orders $(G_n)$ such that 
$\lim_{n \to \infty} p(K_2,G_n) = x$ and 
$\lim_{n \to \infty} p(H,G_n) = y$.
Such a sequence is called \emph{$H$-good}.
They investigated properties of $\Omega_{ind}(H)$ and proved a more general version of the following theorem.

\begin{theorem}[Liu, Mubayi, Reiher \cite{Liu2023feasible}]\label{thm:LMR} For every  graph $H$ we have
\[
\Omega_{ind}(H) = \{ (x,y) \in [0,1]^2 : i_H(x) \leq y \leq I_H(x) \}.
\]
Moreover, the boundary functions $i_H(x)$ and $I_H(x)$ are continuous and almost everywhere differentiable.    
\end{theorem}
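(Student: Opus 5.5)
The plan is to work in the language of graphons, where induced densities are continuous functionals. Let $W$ range over graphons, with $t(K_2,W)$ the edge density and $p(H,W)$ the induced density of $H$. By the Lovász--Szegedy compactness theorem, graph sequences whose edge density and $H$-density converge can be passed to a subsequence converging in cut distance. Hence
\[
\Omega_{ind}(H)=\{(t(K_2,W),p(H,W)) : W \text{ a graphon}\}.
\]
Both functionals are continuous in the cut metric. Consequently $I_H(x)=\max\{p(H,W): t(K_2,W)=x\}$ and $i_H(x)$ is the corresponding minimum, and both extrema are attained. This gives $\Omega_{ind}(H)\subseteq\{i_H\le y\le I_H\}$ directly, and shows each vertical section is closed.

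For the reverse inclusion I need every vertical section to be an interval. I will use the connectedness of the fibre $\{W: t(K_2,W)=x\}$ in the cut topology and apply continuity of $p(H,\cdot)$. Concretely, given $W_0,W_1$ with edge density $x$, I will join them by the path $W_s=(1-s)W_0+sW_1$. This path is continuous and keeps the edge density equal to $x$, because edge density is linear in $W$. Along this path $p(H,W_s)$ moves continuously, so by the intermediate value theorem every value between $i_H(x)$ and $I_H(x)$ is attained. Finally, each such graphon is realised by a sequence of $W$-random graphs, which gives an $H$-good sequence.

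For continuity of $I_H$, upper semicontinuity follows from compactness: take a limit of maximisers at nearby $x_n\to x$. For lower semicontinuity, take a maximiser $W$ at $x$. I can perturb it to edge density $x'$ by $W'=(1-\lambda)W+\lambda\cdot\mathbf{1}$ or $W'=(1-\lambda)W$, which changes $p(H,\cdot)$ by $O(|x-x'|/\min(x,1-x))$. The endpoints $x\in\{0,1\}$ need separate care: there the sections are single points, using $I_H(0)$ and $I_H(1)$. Instead I will use the scaling $W\mapsto W$ restricted to a $(1-\delta)$-fraction of the vertices, with an empty or complete remainder. The same argument applies to $i_H$.

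The main obstacle is almost-everywhere differentiability, which does not follow from continuity alone. My plan is to show that $I_H$ is locally Lipschitz on $(0,1)$, and then invoke Rademacher's (Lebesgue's) theorem. The Lipschitz estimate comes from a quantitative perturbation. Replace a $\lambda$-fraction of the vertices of the maximiser by an empty or complete part, or better, modify $W$ on a small set of measure $\mu$. This shifts the edge density by $\Theta(\mu)$ and changes $p(H,W)$ by $O(\mu\,|V(H)|)$, with constants uniform on compact subintervals. The delicate point is guaranteeing that the edge density moves in both directions at a linear rate, that is, that the perturbation does not degenerate. I expect this to need a case distinction according to whether the maximiser has typical vertex degree near $0$ or near $1$. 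Where the bare Lipschitz estimate fails, I will fall back on proving one-sided bounded variation and applying Lebesgue's monotone-differentiation theorem.
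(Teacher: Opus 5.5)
The paper does not prove this statement. It is quoted from Liu, Mubayi and Reiher, so there is no in-paper argument to compare with, and your proposal has to stand on its own. Its first part does stand. Compactness of graphon space plus $W$-random graphs identifies $\Omega_{ind}(H)$ with the image of $W\mapsto(t(K_2,W),p(H,W))$, and the maximum and minimum on each fibre are attained. Along $W_s=(1-s)W_0+sW_1$ the edge density stays $x$ while $p(H,W_s)$ is a polynomial in $s$, so the intermediate value theorem fills every vertical section. That is all the connectedness you need.

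The weak point is your last paragraph, which is not yet a proof. The Lipschitz estimate is left open. The proposed surgery on a set of measure $\mu$ can indeed degenerate, since emptying a set of low-degree vertices barely moves the edge density. And you hedge with a fallback.

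The obstacle is not real: the global scaling you already used for lower semicontinuity gives a Lipschitz bound in both directions, endpoints included. Couple $G(v,W)$ and $G(v,W')$, $v=|V(H)|$, through the same points $x_1,\dots,x_v$ and the same uniform $U_{ij}$. The samples differ only if some pair is separated, so
\[
|p(H,W)-p(H,W')|\le\binom{v}{2}\,\|W-W'\|_1 .
\]
For $W'=(1-\lambda)W+\lambda$ one has $W'\ge W$, and for $W'=(1-\lambda)W$ one has $W'\le W$. In both cases $\|W-W'\|_1=|t(K_2,W')-t(K_2,W)|$.

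Apply this to a maximiser at $x$, with $\lambda$ chosen so that $t(K_2,W')=x'$. This gives $I_H(x')\ge I_H(x)-\binom{v}{2}|x'-x|$ for all $x,x'\in[0,1]$. Exchanging $x$ and $x'$ yields $|I_H(x)-I_H(x')|\le\binom{v}{2}|x-x'|$, and minimisers give the same for $i_H$.

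So both boundary functions are globally Lipschitz. This gives continuity with no separate endpoint analysis, and your compactness semicontinuity step becomes redundant. By Lebesgue's theorem it also gives a.e. differentiability. Even your $\|\cdot\|_\infty$ estimate would suffice if used in both directions: it gives Lipschitz constant $\binom{v}{2}/\min(a,1-b)$ on each $[a,b]\subset(0,1)$.
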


The feasible region is known  only for very few graphs. 
Notable examples are complete graphs~\cite{MR2433944,Reiher2016,MR4089395,Nikiforov2010}.
In their quest to understand graph profiles, Liu, Mubayi and Reiher posed the following problem.

\begin{problem}[Liu, Mubayi, Reiher~\cite{Liu2023feasible}] \label{pro::localMax}
Is there a graph $H$ such that $I_H(e)$ has a non-trivial local maximum?
\end{problem}

For every graph $H$ that is not a clique or an independent set, it is natural to expect that $I_H(e)$ has a roughly unimodal shape: When $e$ is small, there are too few edges to create many induced
copies of $H$, while when $e$ is close to $1$, there are too few non-edges. Problem~\ref{pro::localMax} asks whether this simple single-peak picture
can fail. An affirmative answer would show that, at some edge density strictly smaller than the global optimal density, adding more edges can surprisingly force the largest possible induced density of $H$ to decrease.

In this paper, we answer Problem~\ref{pro::localMax} affirmatively. 

Our first example is $K_{2,2,1}$, the complete $3$-partite graph on $5$ vertices, with class sizes $2,2,1.$

\begin{theorem}\label{thm:main}
The function $I_{K_{2,2,1}}(e)$ has at least two local maxima on $(0,1)$.
\end{theorem}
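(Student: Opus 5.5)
The plan is to find three edge densities $e_1<e_2<e_3$ in $(0,1)$ with $I(e_2)<\min\{I(e_1),I(e_3)\}$, writing $I=I_{K_{2,2,1}}$. Since $I$ is continuous by Theorem~\ref{thm:LMR}, it attains a maximum on $[0,e_2]$ and on $[e_2,1]$, hence on $(0,e_2)$ and $(e_2,1)$ (note $I(0)=I(1)=0$). These two maxima are local maxima of $I$ on $(0,1)$, and they are distinct because they are separated by the dip at $e_2$. So it suffices to produce two lower bounds from constructions and one upper bound from flag algebras.

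\textbf{Lower bounds.} The natural constructions are balanced complete $k$-partite graphs and their blow-up variants. In a complete $k$-partite graph $T_k$ with equal parts, an induced $K_{2,2,1}$ needs two parts with two vertices each and a third part with one vertex. Hence its density is
\[
\binom{5}{2,2,1}\frac{1}{2}\cdot\frac{k(k-1)(k-2)}{k^5}=\frac{15(k-1)(k-2)}{k^4},
\]
at edge density $(k-1)/k$. This gives $20/27\approx 0.741$ at $k=3$ and $90/256\approx 0.352$ at $k=4$. I would also optimize over unbalanced complete multipartite graphs, for example $k$ parts with one part of a different size, or the complement of a disjoint union of cliques with varying sizes. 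This yields an explicit lower envelope $L(e)$, and I expect it to show a peak near $e=2/3$ and a second peak near some larger density such as $e=3/4$, where $T_4$ with adjusted sizes does well. Note that $K_{2,2,1}$ has complement $K_2\cup K_2\cup K_1$, so in the complement we are counting induced $2K_2+K_1$ in graphs of density $1-e$. Unions of cliques with part sizes tuned continuously give a one-parameter family, which is convenient for computing $L(e)$.

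\textbf{Upper bound at the dip.} I would choose $e_2$ between the two peaks, near where the local construction curves cross. Then I would run a flag algebra SDP on graphs with, say, $7$ or $8$ vertices, adding the constraint that the edge density equals $e_2$ (or lies in a small interval around it), to certify $I(e_2)\le c$ with $c<\min\{L(e_1),L(e_3)\}$. The certificate has to be rounded to an exact rational solution so the inequality is rigorous. A fixed edge density is easily enforced by adding the multiplier term $\lambda\,(p(K_2)-e_2)$ with free $\lambda$.

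\textbf{Main obstacle.} The hard step is whether the gap is large enough for flag algebras to certify it. If the dip between the $k=3$ and $k=4$ regimes is shallow, the SDP bound may not fall below the construction values. In that case I would first try to pick $e_2$ at the point that maximizes the gap. Failing that, I would use larger flags or restrict to an interval in $e$, using monotonicity-type bounds such as $I(e')\le$ a Lipschitz-type estimate to cover neighborhoods. I would also consider whether the second local maximum might instead sit near $e=(k-1)/k$ for larger $k$, where exact values are claimed, and choose the triple accordingly.
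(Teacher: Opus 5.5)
Your plan is the paper's argument. The paper takes $e_1=2/3$ and $e_3=3/4$ with the Tur\'an values $10/27$ and $45/128$, certifies the dip with a $6$-vertex flag-algebra bound $I_{K_{2,2,1}}(0.74)\le 44.95/128$ (Lemma~\ref{lem:0.74}), and concludes by continuity exactly as you describe. Two small remarks: your $k=3$ value is a slip, since $15\cdot 2\cdot 1/81=10/27$, not $20/27$; and the margin is only about $0.05/128$, so your worry about shallowness is justified, yet the gap is certifiable at $e_2=0.74$ with $6$-vertex flags and no unbalanced constructions or Lipschitz covering.
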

The inducibility  of $K_{2,2,1}$, with additional stability results, was determined by Pikhurko, Slia\v{c}an and Tyros~\cite[Theorem 1.7.]{Pikhurko2019}.
They proved $I_{K_{2,2,1}}(2/3) = 10/27$, which is achieved by the balanced complete $3$-partite graph.
The inducibility of $K_{2,2,1}$ can also be derived from the earlier work of Brown and Sidorenko~\cite{Brown1994} and Schelp and Thomason~\cite{Schelp1998}
or the later work of Liu, Pikhurko and Staden~\cite{Liu2023}.

The edgeless graphs, balanced complete  $3$-partite and $4$-partite graphs, and the complete graphs imply
$I_{K_{2,2,1}}(0)=0$, $I_{K_{2,2,1}}(2/3)=10/27$, $I_{K_{2,2,1}}(3/4) \geq 45/128$ and $I_{K_{2,2,1}}(1)=0$, respectively. The following lemma adds one more bound to this list.

\begin{lemma}\label{lem:0.74}
  $I_{K_{2,2,1}}(0.74) \leq 44.95/128$.
\end{lemma}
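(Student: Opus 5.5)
This bound is a pure flag-algebra computation. Our plan is to find a certificate showing that every graphon $W$ with edge density $0.74$ satisfies $p(K_{2,2,1},W)\le 44.95/128$.

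\textbf{Setup.} We work on $N$ vertices, with $N=6$ or $7$ (we would try $N=7$ if $6$ is not tight enough, since $45/128$ is only barely above the target). Let $\mathcal{G}_N$ be the set of all $N$-vertex graphs up to isomorphism. For each $G\in\mathcal{G}_N$ we compute:
\begin{itemize}
\item $p(K_{2,2,1},G)$;
\item $p(K_2,G)$, the edge density.
\end{itemize}
We choose types $\sigma$ of sizes with $|\sigma|\equiv N \pmod 2$, namely sizes $0,2,4$ for $N=6$, or sizes $1,3,5$ for $N=7$. For each type we take the $\sigma$-flags on $(N+|\sigma|)/2$ vertices, and we will use positive semidefinite matrices $Q_\sigma$. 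The edge-density constraint enters as an equality: for a real multiplier $\lambda$ and a polynomial term, we add
\[\lambda\,(p(K_2,\cdot)-0.74)\cdot\textstyle\sum_{F}c_F F,\]
expanded over $\mathcal{G}_N$. Since $p(K_2,W)=0.74$, the factor $p(K_2,\cdot)-0.74$ vanishes, and so does any flag multiple of it. We may also use sign-free multipliers, which is what makes the multiplier $\lambda$ free in sign.

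\textbf{Certificate.} The goal is an identity
\[44.95/128-p(K_{2,2,1},W)=\sum_\sigma \big[\![ \,v_\sigma^{T}Q_\sigma v_\sigma\,]\!\big]_\sigma+(p(K_2,W)-0.74)\,L(W)+\sum_{G}s_G\,p(G,W),\]
where $s_G\ge 0$ and $L$ is a linear combination of densities of graphs on $N-2$ vertices. Equivalently, for every $G\in\mathcal{G}_N$ the coefficient inequality
\[p(K_{2,2,1},G)+\sum_\sigma c_\sigma(G)\cdot Q_\sigma-\ell(G)\big(p(K_2,G)-0.74\big)\le 44.95/128\]
must hold. Averaging these inequalities over $N$-subsets of a large graph with edge density $0.74$ gives the lemma. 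We solve the resulting SDP numerically, say with CSDP, minimizing the bound. We expect a value around $0.3505$, strictly below $45/128\approx0.35156$. This matches the intuition that at density $0.74$ neither the $3$-partite construction nor the $4$-partite construction is locally optimal: $0.74$ lies just below $3/4$, where $I$ should be increasing toward the $4$-partite value.

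\textbf{Rigorous rounding.} We then round the numerical solution to an exact rational certificate. The procedure is:
\begin{enumerate}
\item Project out the near-zero eigenvectors, which correspond to the extremal constructions near density $3/4$, namely complete $4$-partite graphs with slightly unbalanced parts.
\item Round each $Q_\sigma$ to a rational positive semidefinite matrix, for example via an $LDL^{T}$ decomposition with rational entries.
\item Verify every coefficient inequality in exact arithmetic.
\end{enumerate}
The slack between $44.95/128$ and the numerical optimum is what absorbs the rounding errors, which is why we do not aim for a bound too close to the true value.

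\textbf{Main obstacle.} The hardest part is obtaining enough numerical slack. The target lies only $0.05/128$ below the $4$-partite value at density $3/4$. Since $I$ is continuous and probably steep near $3/4$, $N=6$ may be insufficient. In that case we would first move to $N=7$ and include all types of size $1$, $3$ and $5$. If that still falls short, we would add products of the edge constraint with type-based flags, i.e. terms $(p(K_2)-0.74)\cdot F$ with rooted $F$ and free-sign coefficients. These are a standard way to sharpen flag-algebra bounds at a fixed density.
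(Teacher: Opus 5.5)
Your plan is correct and is essentially the paper's proof. The paper already succeeds at $N=6$, with a smaller ansatz than yours: a multiplier $A$ on $\mathcal{F}_4$ for $(p(K_2,\cdot)-0.74)$, plus three squares $\llbracket B^2\rrbracket,\llbracket C^2\rrbracket,\llbracket D^2\rrbracket$ of combinations of six four-vertex flags over the single type of a labeled edge. With this, every one of the $156$ coefficients $c_F$ is strictly below $44.95/128$ (the largest is about $44.947/128$), so neither $N=7$ nor any projection of near-zero eigenvectors is needed.

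One thing to fix is your ``equivalent'' coefficient-wise form. The constraint term must be the coefficient of $G$ in the flag-algebra product $(p(K_2,\cdot)-0.74)\cdot L$, obtained by sampling a random pair and a disjoint random $(N-2)$-set of $G$. It is not $\ell(G)\,(p(K_2,G)-0.74)$, whose average over $N$-vertex samples of $W$ does not vanish in general.
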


These values for  $I_{K_{2,2,1}}$ together with continuity guaranteed by Theorem~\ref{thm:LMR} imply that 
$I_{K_{2,2,1}}(e)$ has a local maximum in each of $(0,0.74)$ and $(0.74,1)$,  which yields Theorem~\ref{thm:main}.

Next we describe what we suspect to be $I_{K_{2,2,1}}(e)$ for $e \in [0,1]$, see Figure~\ref{fig:k221} for an illustration. 
While $I_{K_{2,2,1}}(e)$ is continuous, it seems to not be differentiable at $e=(k-1)/k$. Our experiments with other multipartite graphs indicate that these peaks may become sharper and may create local maxima.

\begin{conj}\label{conj:K221}
\[
I_{K_{2,2,1}}(e) =
\begin{cases}
\sqrt{25/24}\cdot e^{2.5}
&\quad \text{if } e \in [0,2/3], \\[6pt]
[15 a^{5}(k-2) 
 + 30 (ak-1)^{2} a^{3} 
 - 15 (ak-1)a^{4}](k-1)k
& \quad \text{if } e \in \left[\frac{k-1}{k}, \frac{k}{k+1}\right],
\end{cases}
\]
 where $k \geq 3$ and $a=\frac{k + \sqrt{{\left(1-e \right)} k^{2} - e k}}{k^{2} + k}$.
\end{conj}

Denote the complete $k$-partite $n$-vertex graph, known as the \emph{Tur\'an graph}, by $T_k(n)$.
A graph obtained by taking a disjoint union of graphs $G_1$ and $G_2$ is denoted by $G_1 \cup G_2$. The \emph{joint} of $G_1$ and $G_2$, denoted by $G_1 + G_2$, is obtained from $G_1 \cup G_2$ by adding all edges $uv$, where $u \in V(G_1)$ and $v \in V(G_2)$.

The following constructions on $n$-vertex graphs match the bounds in Conjecture~\ref{conj:K221} as $n$ goes to infinity.
\begin{itemize}
    \item $e \in [0,2/3]$:   $T_3(an) \cup \overline{K_{n-an}}$,\  where   $a=\sqrt{3e/2}$.
    \item $e \in [(k-1)/k, k/(k+1)]$:  $T_k(kan) + \overline{K_{n-kan}}$,\
where
$k \geq 3$
and 
$a=\frac{k + \sqrt{{\left(1-e \right)} k^{2} - e k}}{k^{2} + k}$. 
\end{itemize}
See Figure~\ref{fig:conjK221} for illustrations of these constructions in several regimes.
The following result provides supporting evidence for Conjecture~\ref{conj:K221}.

\begin{theorem}\label{lem:k}
For every real number $k\ge3$,
\[
I_{K_{2,2,1}}\left(\frac{k-1}{k}\right)
\le
\frac{15(k-1)(k-2)}{k^4}.
\]
Equality holds whenever $k$ is an integer.
\end{theorem}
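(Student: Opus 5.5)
The lower bound for integer $k$ comes from the Tur\'an graph $T_k(n)$. Its edge density tends to $(k-1)/k$. An induced $K_{2,2,1}$ arises exactly when the five vertices fall into three distinct parts with multiplicities $2,2,1$. There are $k\binom{k-1}{2}$ ways to choose the singleton part and the two pair parts, and each such pattern has probability $\frac{5!}{2!\,2!\,1!}k^{-5}=30/k^5$. This gives $p(K_{2,2,1},T_k(n))\to 15(k-1)(k-2)/k^4$. The real content is therefore the upper bound for every real $k\ge 3$.

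\textbf{Step 1: reduce to complete multipartite graphs.} Since $K_{2,2,1}$ is complete multipartite, I would apply the symmetrization theorem of Schelp and Thomason. It should show that, even with the edge density fixed, the supremum defining $I_{K_{2,2,1}}(e)$ is approached by complete multipartite graphs (in the limit, by graphons). Concretely, it suffices to consider weights $x_1\ge x_2\ge\dots\ge 0$ with $\sum_i x_i=1$. A possible ``dust'' of infinitesimally small parts behaves like a clique portion, and I would handle it as a limit of many tiny parts. The edge condition becomes $p_2:=\sum_i x_i^2=1/k$.

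\textbf{Step 2: write the density as a power-sum polynomial.} Put $p_j=\sum_i x_i^j$. The induced density of $K_{2,2,1}$ is
\[
30\sum_i x_i\sum_{\{j,l\}\not\ni i,\ j\neq l}x_j^2x_l^2
=15\sum_i x_i\Big[(p_2-x_i^2)^2-(p_4-x_i^4)\Big]
=15\big(p_2^2-p_4-2p_2p_3+2p_5\big),
\]
using $p_1=1$. The goal is then the inequality
\[
p_2^2-p_4-2p_2p_3+2p_5\le \frac{(k-1)(k-2)}{k^4}
\qquad\text{whenever } p_1=1,\ p_2=1/k .
\]
Equal weights $1/k$ attain equality when $k$ is an integer.

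\textbf{Step 3: Lagrange multipliers (the main obstacle).} The objective is $F(x)=\sum_i\big(2x_i^5-x_i^4-2p_2x_i^3\big)$ plus constants, and $p_2$ is fixed. At a maximizer, every positive $x_i$ satisfies a polynomial equation
\[
10x^4-4x^3-6p_2x^2-2\lambda x-\mu=0,
\]
so there are at most four distinct positive values. I would sharpen this with a second-order argument. If two distinct values each occur at least twice, perturbing within those groups while preserving $p_1,p_2$ should increase $F$. Reducing further in this way should leave configurations of the form ``$m$ equal parts of size $a$ and at most one or two other parts.''

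I would then compare against the concave envelope. The claimed bound $g(1/k)=(k-1)(k-2)/k^4$ is exactly the Tur\'an curve in the variable $s=p_2$. I expect the reduced cases to lie below the piecewise interpolation between consecutive integers, and I would check that this interpolation stays below $g$. The inequality $g\le$ interpolation is where the real-$k$ statement is delicate. If the general case resists, my fallback is to mimic Reiher's clique-density approach: express $p_5$ and $p_3$ in terms of $p_2$ through Schur-type or majorization bounds that are tight exactly at equal parts.
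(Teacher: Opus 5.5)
The genuine gap is Step 1, and it carries the whole argument. The Schelp--Thomason theorem \cite{Schelp1998} maximizes $\sum_F c_F I(F,G)$ over all $n$-vertex graphs with no side constraint. The symmetrization behind it (replacing a vertex by a clone of a non-adjacent vertex) changes the number of edges, so it gives nothing directly at fixed edge density. The only way to impose the constraint is a multiplier term $-\lambda I(K_2,G)$, which is allowed because $K_2$ is complete. That certifies at best the upper concave envelope of the complete-multipartite profile, and this envelope is too large here.

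To see this, note that $T_3$ contributes the point $(2/3,10/27)$ and $K_n$ contributes $(1,0)$. So for every $\lambda$ the resulting bound at $e=1-1/k$ is at least $\frac{10}{9}(1-e)=\frac{10}{9k}$. Moreover
\[
\frac{10}{9k}>\frac{15(k-1)(k-2)}{k^4}\iff (k-3)(2k^2-21k+18)>0,
\]
which holds for every $k>(21+\sqrt{297})/4\approx 9.56$. Hence this route fails even for integers $k\ge 10$.

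The stronger claim you actually invoke, that at fixed edge density the supremum is approached by complete multipartite graphs, is not a known theorem. For small $e$ it is false. A complete multipartite graphon of edge density $e$ has a part of measure at least $1-e$, which forces $K_{2,2,1}$-density $O(e^3)$. In contrast, $T_3(an)\cup\overline{K_{n-an}}$ gives order $e^{5/2}$. For $e\ge 2/3$ it is exactly what the authors' remark says would require new ideas.

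Even granting Step 1, Step 3 is only a plan (``should increase $F$'', ``I expect''). Its final comparison also cannot close for large $k$. With $g(s)=s^2(1-s)(1-2s)$ we have $g''(s)=2-18s+24s^2>0$ for $s<(18-\sqrt{132})/48\approx 0.136$. So for $k\ge 8$ the chord of $g$ between $s=1/(k+1)$ and $s=1/k$ lies strictly above $g$, and bounding by that interpolation does not yield $\le g$.

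The missing idea is that no structural reduction is needed. The paper counts induced copies through an edge $uv$ joining the two classes of size two:
\[
4\binom{n}{5}p(K_{2,2,1},G)=\sum_{uv\in E}I(u,v)\le\sum_{uv\in E}(d_u-d_{uv})(d_v-d_{uv})d_{uv}.
\]
It then adds and subtracts $\frac{k-3}{k}n(n-d_u)(n-d_v)$. The sum $\sum_{uv\in E}(n-d_u)(n-d_v)\le\frac{k-1}{2k^3}n^4$ is controlled by the result of \cite{balogh2026} that regular graphs are extremal for it. Finally, the per-edge inequality $(d_u-d_{uv})(d_v-d_{uv})d_{uv}-\frac{k-3}{k}n(n-d_u)(n-d_v)\le n^3/k^3$ is proved by a two-case analysis (whether $\min(d_u,d_v)\ge 2n/3$), using $d_{uv}\ge d_u+d_v-n$ and AM--GM. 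This works directly for arbitrary graphs at the prescribed density and for every real $k\ge 3$.

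Your lower-bound computation for $T_k(n)$ and your power-sum formula $15(p_2^2-p_4-2p_2p_3+2p_5)$ are correct.
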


{\bf Remark.} 
Theorem~\ref{lem:k} is not best possible when $k$ is not an integer. It is likely that analyzing the inequalities could slightly improve the upper bound, but it seems that some new ideas are needed to prove that the extremal graphs are complete multipartite. Even proving  Lemma~\ref{lem:0.74} would need some luck.
The result likely could be extended for larger complete $3$-partite graphs, at least for larger edge-densities. Here, one restriction would come from~\cite{balogh2026}, from where we used a tool, for the general case only weaker results are known.

\begin{figure}
\begin{center}    
\begin{tikzpicture}[scale=0.8]
\begin{axis}[ymin=0,ymax=0.4,enlargelimits=false,
width=12cm,height=6cm]
    \addplot [black,fill=gray,fill opacity=0.5,
    ] coordinates {
(0.000000,0.000000) (0.006667,0.000004) (0.013333,0.000021) (0.020000,0.000058) (0.026667,0.000119) (0.033333,0.000207) (0.040000,0.000327) (0.046667,0.000480) (0.053333,0.000670) (0.060000,0.000900) (0.066667,0.001171) (0.073333,0.001486) (0.080000,0.001848) (0.086667,0.002257) (0.093333,0.002716) (0.100000,0.003227) (0.106667,0.003793) (0.113333,0.004413) (0.120000,0.005091) (0.126667,0.005828) (0.133333,0.006625) (0.140000,0.007485) (0.146667,0.008408) (0.153333,0.009396) (0.160000,0.010451) (0.166667,0.011574) (0.173333,0.012766) (0.180000,0.014030) (0.186667,0.015365) (0.193333,0.016774) (0.200000,0.018257) (0.206667,0.019817) (0.213333,0.021454) (0.220000,0.023170) (0.226667,0.024965) (0.233333,0.026841) (0.240000,0.028800) (0.246667,0.030842) (0.253333,0.032968) (0.260000,0.035180) (0.266667,0.037479) (0.273333,0.039865) (0.280000,0.042341) (0.286667,0.044906) (0.293333,0.047563) (0.300000,0.050312) (0.306667,0.053153) (0.313333,0.056089) (0.320000,0.059121) (0.326667,0.062248) (0.333333,0.065473) (0.340000,0.068796) (0.346667,0.072218) (0.353333,0.075740) (0.360000,0.079363) (0.366667,0.083089) (0.373333,0.086917) (0.380000,0.090850) (0.386667,0.094887) (0.393333,0.099030) (0.400000,0.103280) (0.406667,0.107637) (0.413333,0.112103) (0.420000,0.116678) (0.426667,0.121363) (0.433333,0.126159) (0.440000,0.131068) (0.446667,0.136089) (0.453333,0.141224) (0.460000,0.146473) (0.466667,0.151838) (0.473333,0.157319) (0.480000,0.162917) (0.486667,0.168633) (0.493333,0.174468) (0.500000,0.180422) (0.506667,0.186496) (0.513333,0.192692) (0.520000,0.199009) (0.526667,0.205449) (0.533333,0.212012) (0.540000,0.218700) (0.546667,0.225513) (0.553333,0.232451) (0.560000,0.239516) (0.566667,0.246708) (0.573333,0.254028) (0.580000,0.261478) (0.586667,0.269056) (0.593333,0.276765) (0.600000,0.284605) (0.606667,0.292577) (0.613333,0.300681) (0.620000,0.308918) (0.626667,0.317290) (0.633333,0.325796) (0.640000,0.334437) (0.646667,0.343214) (0.653333,0.352129) (0.660000,0.361180) (0.666667,0.370370) (0.670000,0.366778) (0.673333,0.363408) (0.676667,0.360262) (0.680000,0.357340) (0.683333,0.354643) (0.686667,0.352172) (0.690000,0.349928) (0.693333,0.347912) (0.696667,0.346125) (0.700000,0.344570) (0.703333,0.343247) (0.706667,0.342158) (0.710000,0.341307) (0.713333,0.340694) (0.716667,0.340324) (0.720000,0.340200) (0.723333,0.340326) (0.726667,0.340707) (0.730000,0.341350) (0.733333,0.342262) (0.736667,0.343455) (0.740000,0.344944) (0.743333,0.346751) (0.746667,0.348919) (0.750000,0.351562) (0.753333,0.344696) (0.756667,0.338162) (0.760000,0.331965) (0.763333,0.326112) (0.766667,0.320607) (0.770000,0.315460) (0.773333,0.310678) (0.776667,0.306272) (0.780000,0.302255) (0.783333,0.298644) (0.786667,0.295459) (0.790000,0.292730) (0.793333,0.290502) (0.796667,0.288852) (0.800000,0.288000) (0.803333,0.280202) (0.806667,0.272815) (0.810000,0.265851) (0.813333,0.259328) (0.816667,0.253266) (0.820000,0.247689) (0.823333,0.242635) (0.826667,0.238157) (0.830000,0.234350) (0.833333,0.231481) (0.836667,0.223609) (0.840000,0.216208) (0.843333,0.209307) (0.846667,0.202943) (0.850000,0.197169) (0.853333,0.192078) (0.856667,0.187894) (0.860000,0.180858) (0.863333,0.173684) (0.866667,0.167077) (0.870000,0.161113) (0.873333,0.155943) (0.876667,0.150115) (0.880000,0.143142) (0.883333,0.136782) (0.886667,0.131166) (0.890000,0.125682) (0.893333,0.119029) (0.896667,0.113042) (0.900000,0.108000) (0.903333,0.101546) (0.906667,0.095776) (0.910000,0.090487) (0.913333,0.084591) (0.916667,0.079572) (0.920000,0.073846) (0.923333,0.068886) (0.926667,0.063541) (0.930000,0.058636) (0.933333,0.053926) (0.936667,0.049105) (0.940000,0.044558) (0.943333,0.040198) (0.946667,0.036020) (0.950000,0.032063) (0.953333,0.028196) (0.956667,0.024604) (0.960000,0.021197) (0.963333,0.017991) (0.966667,0.015037) (0.970000,0.012303) (0.973333,0.009824) (0.976667,0.007602) (0.980000,0.005645) (0.983333,0.003961) (0.986667,0.002561) (0.990000,0.001455) (0.993333,0.000653) (0.996667,0.000165) (1,0)
    }
    ;
\end{axis}
\end{tikzpicture}
\end{center}
\caption{Lower bound on $I_{K_{2,2,1}}(e)$ from constructions.}\label{fig:k221}
\end{figure}

\tikzset{myscale/.style={scale=0.9}}
\begin{figure}
\begin{center}
    \begin{tikzpicture}[myscale]
        \draw(0,0) 
        node[big_turan_vertex_center,scale=0.8](x0){}
        (90:1) node[big_turan_vertex](x1){}
        (210:1) node[big_turan_vertex](x2){}
        (330:1) node[big_turan_vertex](x3){}
        ;
        \draw[line width=8pt, color=black] (x1)--(x2)--(x3)--(x1);
        \draw (0,-1) node[below]{$e \in [0,2/3]$};
    \end{tikzpicture}
\hskip 2em
    \begin{tikzpicture}[myscale]
        \draw
        (90:1) node[big_turan_vertex](x1){}
        (210:1) node[big_turan_vertex](x2){}
        (330:1) node[big_turan_vertex](x3){}
        ;
        \draw[line width=8pt, color=black] (x1)--(x2)--(x3)--(x1);
        \draw (0,-1) node[below]{$e = 2/3$};
    \end{tikzpicture}
\hskip 2em   
    \begin{tikzpicture}[myscale]
        \draw(0,0) 
        node[big_turan_vertex_center,scale=0.8](x0){}
        (90:1) node[big_turan_vertex](x1){}
        (210:1) node[big_turan_vertex](x2){}
        (330:1) node[big_turan_vertex](x3){}
        ;
        \draw[line width=8pt, color=black] (x1)--(x2)--(x3)--(x1) (x0)--(x1) (x0)--(x2) (x0)--(x3);
        \draw (0,-1) node[below]{$e \in [2/3,3/4]$};
    \end{tikzpicture} 
\hskip 2em
     \begin{tikzpicture}[myscale]
        \draw
        (45:1) node[big_turan_vertex](x1){}
        (135:1) node[big_turan_vertex](x2){}
        (225:1) node[big_turan_vertex](x3){}
        (-45:1) node[big_turan_vertex](x4){}
        ;
        \draw[line width=8pt, color=black] (x1)--(x2)--(x3)--(x1) (x4)--(x1) (x4)--(x2) (x4)--(x3);
        \draw (0,-1) node[below]{$e = 3/4$};
    \end{tikzpicture}
\end{center}
    \caption{Constructions for Conjecture~\ref{conj:K221}.}
    \label{fig:conjK221}
\end{figure}

We next give a second example, based on cliques with one edge removed.
For $t\ge3$, let $K_t^-$ denote the graph obtained from $K_t$ by
deleting one edge.
We show that $K_t^-$ has a local maximum for several values of $t$.

\begin{theorem}\label{thm:ktminus-local}
$I_{K_t^-}(x)$ has a strict non-global local maximum for all $t \in \{5,8,11,\ldots,74\}$.
\end{theorem}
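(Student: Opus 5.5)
The plan is to reduce the problem to an explicit finite-dimensional optimisation over complete multipartite graphs, and then compare three numbers: the value at a Tur\'an density, the values at nearby densities on both sides, and the global maximum.

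\emph{Reduction.} The graph $K_t^-$ is complete multipartite, with one part of size $2$ and $t-2$ parts of size $1$. For such $H$, the symmetrization theorem of Schelp and Thomason says that an extremal graph for the induced density of $H$ can be taken complete multipartite. I expect this to hold even under the constraint that the edge density is fixed, since symmetrizing twin classes changes the edge density only in a controlled, monotone way. If that extension fails as stated, I will track the edge density explicitly through the symmetrization steps. Granting it, $I_{K_t^-}(x)$ is the supremum of
\[
P(w)=\frac{t!}{2}\sum_{i}w_i^2\, e_{t-2}(w_{-i})
\]
over weight vectors $w=(w_1,w_2,\dots)$ with $\sum_i w_i=1$ and $1-\sum_i w_i^2=x$. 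Here $e_{t-2}(w_{-i})$ is the elementary symmetric polynomial of degree $t-2$ in the weights other than $w_i$. Equivalently, up to normalisation, $P(w)$ counts choices of a copy of $K_{t-2}$ in the other parts together with a pair inside one part.

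\emph{Candidate points.} Let $x_k=(k-1)/k$. The balanced $k$-partite weights give the explicit value
\[
I_{K_t^-}(x_k)\ge \frac{t!}{2}\,(k-1)\binom{k-1}{t-2}k^{-t}.
\]
Maximising this over $k$ locates the global maximum, which sits at roughly $k\approx t+\Theta(t)$. For the local maximum I aim for a different $k$. The natural candidate is the smallest useful one, $k=t-1$, where every copy of $K_t^-$ uses all parts. Intuitively, a little extra edge density forces the parts to become unbalanced, or forces an extra part that wastes mass, and both effects lose copies. I will check the arithmetic-progression restriction $t\equiv 2\pmod 3$ against this choice, and adjust $k$ if a different Tur\'an point is the one that works.

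\emph{Local upper bound via Reiher.} The main step is to show that $I_{K_t^-}(x)<I_{K_t^-}(x_k)$ for $0<|x-x_k|<\delta$. I would write $P(w)$ as a combination of clique densities,
\[
P(w)=c_1\, k_{t-1}(w)-c_2\, k_t(w),
\]
with $c_1,c_2>0$. This identity is the one-line observation that an induced $K_t^-$ is a $t$-set containing a copy of $K_{t-1}$ with exactly one missing pair; I will check the constants by evaluating on two explicit weight vectors. The quantity $k_{t-1}(w)$ is bounded above by its value under the constraint $\sum_i w_i^2=1-x$, which is controlled by a Kruskal--Katona/Maclaurin-type bound. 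The quantity $k_t(w)$ is bounded below by Reiher's clique density theorem, whose extremal curve is a concave scallop with cusps exactly at the Tur\'an densities $x_k$. The cusp of Reiher's bound at $x_k$ should make the resulting upper bound on $P$ have a strict local maximum at $x_k$. The upper bound on $k_{t-1}$ is smooth there, while the $-c_2\,k_t$ term has a downward kink.

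\emph{Main obstacle.} The difficult part is verifying that the one-sided slopes of the combined bound have the correct signs on both sides of $x_k$, while the bound still matches the Tur\'an value at $x_k$. This reduces to explicit inequalities in $t$ involving Reiher's extremal polynomials. I expect these inequalities to hold exactly for $t\in\{5,8,\dots,74\}$ and to fail outside this range, which would explain the finite list. I would first check them symbolically for general $t$, and if that does not close, verify them case by case over the list. Finally, non-globality follows by comparing $I_{K_t^-}(x_k)$ with the Tur\'an value at the optimal $k$ found in the candidate-points step.
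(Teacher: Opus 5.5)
\textbf{There is a genuine gap.} Your decomposition itself is sound. In fact $p(K_t^-,G)\le\frac t2\bigl(p(K_{t-1},G)-p(K_t,G)\bigr)$ holds in \emph{every} graph: a $t$-set containing a $(t-1)$-clique contains exactly $t$, $2$ or $1$ such cliques according as it induces $K_t$, $K_t^-$ or anything else. Equality holds for complete multipartite graphs.

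\textbf{The decoupled bound cannot be tight at $x_k$.} The gap is the next step, where you bound the two clique terms separately. Any upper bound on $p(K_{t-1})$ in terms of edge density alone is at least the maximum $K_{t-1}$-density at $x_k=1-1/k$. That maximum is strictly larger than $p(K_{t-1},T_k)$, even among complete multipartite graphons. For $t=5$, $k=9$:
\begin{itemize}
\item $T_9$ has $K_4$-density $3024/6561\approx 0.461$;
\item the complete multipartite graphon with one part of weight $1/3$ and all other pairs adjacent has edge density $8/9$ and $K_4$-density $48/81\approx 0.593$;
\item the true maximum, by Kruskal--Katona, is $(8/9)^2\approx 0.790$.
\end{itemize}
So your envelope lies strictly above $P_{t,k}=p(K_t^-,T_k)$ at $x_k$. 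A local maximum of that envelope then says nothing about $I_{K_t^-}$; you would need the envelope to equal $I_{K_t^-}$ at $x_k$. No decoupled bound can achieve this, because maximizing $K_{t-1}$ and minimizing $K_t$ have different extremizers.

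\textbf{The missing idea} is a single coupled inequality in which every auxiliary clique term needs only a \emph{lower} bound. That is exactly what Reiher's theorem supplies, and exactly what $T_k$ attains. The paper proves, for every graphon $W$,
\[
p(K_t^-,W)+\mu_R\,p(K_R,W)+\mu_{R+1}\,p(K_{R+1},W)\le(\mu_R+\mu_{R+1})\,p(K_2,W).
\]
Here $\mu_R,\mu_{R+1}>0$ are fixed by requiring equality at both $T_{k-1}$ and $T_k$, and $R>t$ in every case ($R=7$ for $t=5$, $R=230$ for $t=74$). This gives
\[
I_{K_t^-}(x)\le(\mu_R+\mu_{R+1})x-\mu_Rg_R(x)-\mu_{R+1}g_{R+1}(x),
\]
where $g_r$ is the minimum $K_r$-density from Reiher's theorem. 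This bound is tight at $x_k$. The kink you correctly anticipate is then read off from explicit one-sided derivatives of $g_r$ at $x_k$.

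\textbf{Your reduction step.} Schelp--Thomason concerns maximizing a fixed linear combination of induced counts over all $n$-vertex graphs. It says nothing at prescribed edge density, and the constrained version you want is false in general. For small $x$, a complete multipartite graphon of edge density $x$ has a part of weight at least $1-x$, so its $K_t^-$-density is $O(x^{t-2})$. A balanced $T_q$ on a $\Theta(\sqrt{x})$-fraction of the vertices plus isolated vertices gives $\Theta(x^{t/2})$, which is larger. The coupled inequality above avoids this: edge density enters as a $K_2$-term with negative coefficient, which is allowed since $K_2$ is a clique. The problem becomes unconstrained, and Schelp--Thomason plus a balancing argument reduce it to checking the graphons $T_q$. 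The paper checks small $q$ exactly and large $q$ by a Bonferroni estimate.

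\textbf{Your candidate $k=t-1$ cannot work.} Attach a new part of weight $z$ to $T_k$. The edge density rises at rate $2/k$, and the $K_t^-$-density changes at a rate proportional to $k\binom{k-1}{t-3}-t\binom{k-1}{t-2}$. This is positive whenever $k<t(t-2)/2$, which includes $k=t-1$ for all $t\ge4$. For such $k$, constructions just to the right of $x_k$ beat $P_{t,k}$, so $x_k$ cannot be a local maximum at the Tur\'an value. The best $T_q$ has $q=\Theta(t^2)$ (e.g. $q=8$ for $t=5$), not $t+\Theta(t)$. The paper's local maximum sits at the next Tur\'an density, $k=9$ for $t=5$.
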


The proof combines a
Schelp--Thomason reduction to balanced complete multipartite graphons
\cite{Schelp1998} with the clique density theorem by Reiher~\cite{Reiher2016}.

While we confirm the existence of graphs which have at least two local maxima, we suspect that the number of local maxima of an inducibility profile is unbounded in general.
Let $\mathcal{M}$ be the family of complete multipartite graphs. 

\begin{conj}
    For every $k > 0$, there is a graph $H \in \mathcal{M}$, where $I_H(x)$ has at least $k$ local maxima. 
\end{conj}

The inducibility of balanced graphs in $\mathcal{M}$ was determined by Bollob\'as, Egawa, Harris and Jin~\cite{Bollobs1995} and a more general result for inducibility of a linear combination of graphs from $\mathcal{M}$ was obtained by Schelp and Thomason~\cite{Schelp1998}.

In Section~\ref{extsec}, we prove Theorem~\ref{lem:k}, and in Section~\ref{mainsec}, we prove Theorem~\ref{thm:main}. In Section~\ref{sec:larger-t}, we prove Theorem~\ref{thm:ktminus-local}.
In the Appendices, we provide further details of the flag algebra and computer-assisted calculations.

\section{Proof of Theorem~\ref{lem:k}}\label{extsec}

\begin{proof}[Proof of Theorem~\ref{lem:k}]    
We want to show for every real number $k\ge3$,
\[
I_{K_{2,2,1}}\left(\frac{k-1}{k}\right)
\le
\frac{15(k-1)(k-2)}{k^4}.
\]
The lower bound is achieved by  $T_k(n)$.
For the upper bound note that the result is known for $k=3$. In the rest of the proof we assume that $k\ge 3$.

To prove the upper bound, it is enough to consider
an arbitrary sequence $(G_n)$ with $p(K_2,G_n)\to \frac{k-1}{k}$ and prove that
\[
p(K_{2,2,1},G_n)\le \frac{15(k-1)(k-2)}{k^4}+o(1).
\]
By changing $o(n^2)$ edges, we may assume
\[
e(G_n)=\frac{k-1}{k} \cdot \binom n2+O(1).
\]
Indeed, changing $o(n^2)$ edges changes the number of induced copies of
$K_{2,2,1}$ by at most $o(n^5)$. Thus, for readability, we write
$G=G_n$ and suppress lower-order terms throughout the calculation.

As a warm-up, we first describe the proof for regular graphs. Notice that the number of edges of $G$ is 
$$ e(G)= n^2\cdot\frac{k-1}{2k}.$$
Denote $d_u$ the degree of the vertex $u$, and $d_{uv}$  the number of common neighbors of $u$ and $v$. 
Denote by $I(u,v)$ the number of induced copies of $K_{2,2,1}$, where $uv$ is an edge between the two larger classes. 
Observe that from an edge $uv$, every induced $K_{2,2,1}$ containing $uv$ between the two larger classes,  can be built  by choosing a common neighbor $x$, a $u'\in N(u)-N(v)$, and a $v'\in N(v)-N(u)$, but not all those choices work, as some of $v'u', xv', xu'$ might not be an edge. 
Writing $d=(k-1)n/k$, 
 and assuming that $n$ is a multiple of $k$,
 and using
$$
I(u,v) \leq
\underbrace{|N(u)-N(v)|}_{u'}\cdot \underbrace{| N(v)-N(u)|}_{v'}\cdot  \underbrace{|N(v)\cap N(u)|}_{x} = (d-d_{uv})^2d_{uv}\le (n-d)^2\cdot (2d-n), $$
where we use the fact that when $d\ge 2n/3$, the expression is optimal when $d_{uv}$ is minimized.
Now we have
\begin{equation}\label{goale}
4\cdot \binom{n}{5}\cdot 
p(K_{2,2,1},G)
=
\sum_{uv \in E(G)} I(u,v) 
\le \underbrace{n^2\cdot\frac{k-1}{2k}}_{=e(G)} \frac{n^2}{k^2} \frac{(k-2)n}{k},
\end{equation}
which implies the required inequality from the statement of the theorem.

Now we extend this argument to the general case.
We prove the upper bound in the following way
\begin{align}\nonumber
4\cdot \binom{n}{5}\cdot 
p(K_{2,2,1},G)
=
\sum_{uv\in E(G)}I(u,v)
&\le
\sum_{uv\in E(G)}(d_u-d_{uv})(d_v-d_{uv})\cdot d_{uv}
\\ 
\label{semi}
&\le 
\underbrace{n^2\cdot\frac{k-1}{2k}}_{=e(G)} \frac{(k-2)n^3}{k^3}.
\end{align}
The rest of the proof is proving the last inequality in \eqref{semi}.
We start by adding zero to the left-hand side:

\begin{align*}
& \sum_{uv\in E(G)}I(u,v) \leq 
\sum_{uv\in E(G)}(d_u-d_{uv})(d_v-d_{uv})\cdot d_{uv} \\ 
=&\sum_{uv\in E(G)}(d_u-d_{uv})(d_v-d_{uv})\cdot d_{uv} - \frac{k-3}{k}n(n-d_u)(n-d_v) 
 + \frac{k-3}{k}n(n-d_u)(n-d_v).
\end{align*}
Theorem 2.2 in~\cite{balogh2026} implies that the left-hand side of \eqref{eq:Balogh} is maximized by regular graphs,  giving
\begin{equation}\label{eq:Balogh}
\sum_{uv\in E(G)}(n-d_u)(n-d_v)\leq 
\frac{k-1}{2k^3}\cdot n^4.
\end{equation}
This gives 
\begin{equation}
\sum_{uv\in E(G)}\frac{k-3}{k}n(n-d_u)(n-d_v)\le  \frac{(k-1)(k-3)}{2k^4} \cdot n^5
=
\underbrace{n^2\cdot\frac{k-1}{2k}}_{=e(G)} \frac{k-3}{k^3}\cdot n^3.
\end{equation}
Hence, it is sufficient to prove:
\begin{equation}
\sum_{uv\in E(G)}(d_u-d_{uv})(d_v-d_{uv})\cdot d_{uv} - \frac{k-3}{k}n(n-d_u)(n-d_v) \le \underbrace{n^2\cdot\frac{(k-1)}{2k}}_{=e(G)}\cdot 
\frac{n^3}{k^3}.
\end{equation}
Our goal is to upper bound each term in the sum above by $\frac{n^3}{k^3}$.
Fix an arbitrary $uv\in E(G)$. We want to show:
\begin{equation}
\label{eq:want}
(d_u-d_{uv})(d_v-d_{uv})\cdot d_{uv} - \frac{k-3}{k}n(n-d_u)(n-d_v) \le 
\frac{n^3}{k^3}.
\end{equation}

In the following optimization, we vary the numerical variables $d_u,d_v,d_{uv}$ subject to the necessary inequalities they satisfy and prove our claim, this is not meant to correspond to modifying the graph $G$.

{\bf Case (i)}: Assume 
$2n/3 \leq d_u \leq d_v$. 
We first prove the  following inequality:
\begin{align}\label{eq:2/3}
(d_u-d_{uv})(d_v-d_{uv})\cdot d_{uv}\le (n-d_u)(n-d_v)(d_u+d_v-n).
\end{align}
Observe $d_u - d_{uv} \leq n-d_v$,
 and since $2n/3 \leq d_u$, we have  $d_{uv} \geq d_u/2$. 
Hence for a fixed $d_u$, $(d_u-d_{uv})d_{uv}$ is maximized when $d_{uv}$ is minimized. 
Moreover, $d_v - d_{uv}$ is also maximized when $d_{uv}$ is minimized.
As $d_u+d_v-n\le d_{uv}$, the  left-hand side of\eqref{eq:2/3} is maximized when
$d_u+d_v-n= d_{uv}$, which implies that the relation \eqref{eq:2/3} holds.

\noindent Now we prove \eqref{eq:want}, using   \eqref{eq:2/3}
and applying the arithmetic mean-geometric mean inequality:\footnote{$x_1x_2x_3 \leq ((x_1+x_2+x_3)/3 )^3$.} 
\begin{align*}
&~ (d_u-d_{uv})(d_v-d_{uv})\cdot d_{uv} - \frac{k-3}{k}n(n-d_u)(n-d_v) \\
&\le
\left(d_u+d_v-n- \frac{k-3}{k}n\right)(n-d_u)(n-d_v)  \leq \frac{n^3}{k^3}.
\end{align*}

{\bf Case (ii)}:
Assume $d_u \leq d_v$ and $d_u \leq 2n/3$. 
We want to show~\eqref{eq:want}, that we rearrange as:

\begin{equation}\label{eq:case2}
(d_u-d_{uv})(d_v-d_{uv})\cdot d_{uv} \le  \frac{k-3}{k}n(n-d_u)(n-d_v) + 
\frac{n^3}{k^3}.
\end{equation}

Observe that $d_u-d_{uv} \le n-d_v$,
$d_v-d_{uv} \le n-d_u$,
and $d_{uv} \leq d_u \leq 2n/3$.
We are done if $d_{uv} \leq ((k-3)/k + 1/k^3)n.$
That is already the case for $k\ge 9$ when $(k-3)/k+1/k^3 > 2/3$.
For fixed $d_u$ and $d_v$, we aim to  find the value of $d_{uv}$ when the left-hand side is maximized.
If $d_{uv} > d_u/2$, then decreasing $d_{uv}$ to $d_u/2$ increases  $(d_u-d_{uv})d_{uv}$ as well as $(d_v - d_{uv})$,
meaning that the maximum  happens when $d_{uv}\le d_u/2< n/3$.

Observe that for given $d_u, d_{uv}$ the worst case is when $d_v$ is maximized, but then $d_u+d_v=n+ d_{uv}$. 
Writing it into~\eqref{eq:case2} we obtain after rearranging 

\begin{equation}
  \left( \frac{3-2k}{k}n + d_u+d_v\right) (n-d_u)(n-d_v) \le 
\frac{n^3}{k^3},
\end{equation}
where the last inequality holds by the 
arithmetic mean-geometric mean inequality.
This completes the proof of \eqref{semi}.
\end{proof}

An alternative proof of Theorem~\ref{lem:k} using flag algebras is in Appendix A.

\section{Proof of Theorem~\ref{thm:main}}\label{mainsec}

First we show $I_{K_{2,2,1}}(0.74) < 44.95/128$ to prove Lemma~\ref{lem:0.74}. 
The goal of Lemma~\ref{lem:0.74} is to provide a calculation that fits in the paper and gives a bound strictly less than  $I_{K_{2,2,1}}(3/4) = 45/128$.
There is a calculation using flag algebras on $7$ vertices for $I_{K_{2,2,1}}(0.74)$ that numerically coincides with the bound from Conjecture~\ref{conj:K221}, however, this computation is large, and  would not fit in this paper.

\begin{proof}[Proof of Lemma~\ref{lem:0.74}]
We denote by $\mathcal{F}_\ell$ the set of (unlabelled)  $\ell$-vertex graphs.
Define the following linear combinations of flags.
\begin{align*}
  A &\coloneqq     -0.474 \Fe40{1 1 1 1 1 1} - 0.581 \Fe40{1 1 1 1 1 2} + 3.861 \Fe40{1 1 2 1 2 2} - 0.795 \Fe40{1 1 1 2 2 2} - 0.362 \Fe40{1 1 2 2 2 2} - 1.982 \Fe40{2 2 2 2 2 2} \\
    &~ - 0.766 \Fe40{1 2 2 2 2 2} - 1.235 \Fe40{1 2 2 2 2 1} \\
  B &\coloneqq  -0.97212 \Fe42{2 1 1 2 2 1} + 0.01048 \Fe42{2 2 2 2 2 2} + 0.13324 \Fe42{2 1 2 2 2 2} + 0.00138 \Fe42{2 2 2 1 2 2} + 0.04384 \Fe42{2 2 2 2 2 1} \\&~ + 0.18756 \Fe42{2 1 2 2 1 2} \\
  C &\coloneqq   -0.0986 \Fe42{2 1 1 2 2 1} - 0.356 \Fe42{2 2 2 2 2 2} + 0.175 \Fe42{2 1 2 2 2 2} + 0.1587 \Fe42{2 2 2 1 2 2} + 0.5142 \Fe42{2 2 2 2 2 1} - 0.737 \Fe42{2 1 2 2 1 2}\\
  D &\coloneqq  -0.0922 \Fe42{2 1 1 2 2 1} - 0.003 \Fe42{2 2 2 2 2 2} - 0.6957 \Fe42{2 1 2 2 2 2} + 0.7122 \Fe42{2 2 2 1 2 2} + 0.01 \Fe42{2 2 2 2 2 1} + 0.0089 \Fe42{2 1 2 2 1 2} 
\end{align*}

Assuming that the density of edges is $0.74$,  we have $ \left(  \Fe20{2} - 0.74  \right) = 0$. 
By adding non-negative terms to the density of $K_{2,2,1}$, we obtain the desired upper bound as
 \begin{align*}
  \Fe50{2 2 2 1 1 2 2 2 2 2}  &\leq   \Fe50{2 2 2 1 1 2 2 2 2 2} 
   + A \times  \left(  \Fe20{2} - 0.74  \right) 
   + 14.509\cdot  \left\llbracket B  ^{2} \right\rrbracket  
   + 6.822\cdot  \left\llbracket  C ^{2} \right\rrbracket 
   + 0.444\cdot  \left\llbracket  D^{2} \right\rrbracket   \\
   &= \sum_{F \in \mathcal{F}_6} c_F F < \frac{44.95}{128} \sum_{F \in \mathcal{F}_6} F =\frac{44.95}{128}.
 \end{align*}
Since $|\mathcal{F}_6|=156$ and the coefficients $c_F$ are numerical, we list them in the Appendix~B. \end{proof}

\begin{proof}[Proof of Theorem~\ref{thm:main}]
It follows from Lemma~\ref{lem:0.74} and Theorem~\ref{lem:k} that \quad 
\[
 I_{K_{2,2,1}}(0) = 0, \quad    I_{K_{2,2,1}}(2/3) = 10/27>44.95/128, \]
 \[
I_{K_{2,2,1}}(0.74) < 44.95/128, \quad 
I_{K_{2,2,1}}(0.75) = 45/128, \quad 
I_{K_{2,2,1}}(1) = 0.
\]
In particular, we know that 
$I_{K_{2,2,1}}(2/3) = 10/27$ is a global maximum, and $I_{K_{2,2,1}}(0.74) < 
I_{K_{2,2,1}}(0.75)$, i.e., there is another local maximum which is at least $0.74$.
\end{proof}

\section{Proof of Theorem~\ref{thm:ktminus-local}}\label{sec:larger-t}

For several values of $t$, we show an upper bound on 
the density of $K_t^-$ that is a linear combination \eqref{eq:homogeneous-certificate} of edge density and densities of two other cliques. 
This upper bound has a local maximum.
The results are summarized in Table~\ref{tab:homogeneous-certificates}.

We first state a lemma implying we need to verify \eqref{eq:homogeneous-certificate} only for complete balanced multipartite setting.

\begin{lemma}\label{lem:balanced-reduction}
Let $\mathcal S$ and $\mathcal R$ be finite sets of positive integers, with $s\ge 3$ for every $s\in\mathcal S$. For each $s\in\mathcal S$, let $\alpha_s>0$, and for each $r\in\mathcal R$, let $\beta_r\in\mathbb R$. Then, for every $n$, the quantity
\[
\sum_{s\in\mathcal S}\alpha_s I(K_s^-,G)
+
\sum_{r\in\mathcal R}\beta_r I(K_r,G)
\]
is maximized, among all $n$-vertex graphs $G$, by a balanced complete multipartite graph.
\end{lemma}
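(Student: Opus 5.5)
We plan to prove Lemma~\ref{lem:balanced-reduction} in two stages. First, a Zykov-type symmetrization in the spirit of Schelp and Thomason~\cite{Schelp1998} shows that some maximizer is complete multipartite. Second, a two-part balancing argument shows that some maximizer is balanced. Throughout we write
\[
f(G)=\sum_{s\in\mathcal S}\alpha_s I(K_s^-,G)+\sum_{r\in\mathcal R}\beta_r I(K_r,G).
\]

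\emph{Stage 1 (complete multipartite).} Fix two nonadjacent vertices $u,v$. We decompose
\[
f(G)=f(G-u-v)+a(u)+a(v)+c(u,v).
\]
Here $a(w)$ is the weighted count of copies containing $w$ but not the other vertex. The term $c(u,v)$ collects copies containing both $u$ and $v$. Since $uv\notin E(G)$, no clique contains both, so only copies of $K_s^-$ contribute, and in each such copy $\{u,v\}$ must be the missing edge. Hence
\[
c(u,v)=\sum_{s}\alpha_s\,k_{s-2}\bigl(N(u)\cap N(v)\bigr),
\]
where $k_j(X)$ denotes the number of $j$-cliques inside $X$.

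Let $G_u$ be obtained by replacing $v$ with a nonadjacent clone of $u$, and define $G_v$ symmetrically. Then
\[
f(G_u)=f(G-u-v)+2a(u)+\sum_s\alpha_s k_{s-2}(N(u)),
\]
and similarly for $G_v$. Since $\alpha_s>0$ and $k_{s-2}(N(u)\cap N(v))\le\min\{k_{s-2}(N(u)),k_{s-2}(N(v))\}$, we get $f(G_u)+f(G_v)\ge 2f(G)$. So one of the two clonings does not decrease $f$. This is exactly where $\alpha_s>0$ is used; the signs of the $\beta_r$ are irrelevant because cliques never contain a non-edge.

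We then run the standard Zykov procedure. We maintain twin classes of pairwise nonadjacent vertices with identical neighbourhoods, and at each step we absorb one class into another via the cloning above, never decreasing $f$. We must make sure the number of twin classes strictly decreases, so the procedure terminates. The procedure ends when non-adjacency is an equivalence relation, that is, when $G$ is complete multipartite. This bookkeeping (ordering the clonings so that previously created twin classes are not destroyed) is the step we expect to need the most care. We will follow the Schelp--Thomason argument there: clone a whole twin class at once, and compare the average over the class.

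\emph{Stage 2 (balancing).} Let $G$ be complete multipartite with part sizes $x_1,\dots,x_m$. Fix two parts with sizes $a,b$ and $a+b=M$, and hold the other parts fixed. We claim that $f$ is an affine function of $p=ab$. Each $I(K_r,G)$ uses at most one vertex from each part, so it is affine in $a+b$ and $ab$. Each copy of $K_s^-$ uses exactly one part twice and every other part at most once. The dependence on $a,b$ enters only through
\[
a+b,\quad ab,\quad \binom a2+\binom b2=\tbinom{M}{2}-ab,\quad \binom a2 b+\binom b2 a=\tfrac{ab(M-2)}{2},
\]
all of which are affine in $p$ for fixed $M$. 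Hence, for fixed $M$, $f$ is maximized at an extreme value of $p$. Either the two parts are made as equal as possible ($|a-b|\le1$), or one of them is emptied, which merges them into a single part.

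Repeating this over all pairs, and noting that each merge decreases the number of parts, we reach a maximizer in which every pair of parts differs in size by at most one. That graph is a balanced complete multipartite graph, which proves the lemma.
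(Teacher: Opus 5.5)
Your proposal is correct and follows essentially the paper's route. First comes a Schelp--Thomason symmetrization to a complete multipartite maximizer; the paper simply cites their Theorem~1, while you re-derive the key inequality $f(G_u)+f(G_v)\ge 2f(G)$ and defer only the bookkeeping, which is harmless since at a maximizer this inequality forces both clonings to be maximizers as well. Then comes the same observation that $f$ is affine in $xy$ once $x+y$ is fixed.

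The one difference is the balancing endgame. The paper takes a complete multipartite maximizer with the fewest parts, so that $B>0$ is forced immediately. This avoids the termination argument your iterative equalize-or-merge procedure leaves implicit: you need a potential such as the number of parts, and then $\sum_i x_i^2$ at a fixed number of parts, both of which strictly decrease under merges and strict equalizations respectively.
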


\begin{proof}
We use Theorem~1 of Schelp and Thomason~\cite{Schelp1998}. In the form needed here, it says that if
\[
f(G)=\sum_F c_F I(F,G),
\]
where each $F$ is complete multipartite, and where $c_F>0$ unless $F$ is a complete graph, in which case $c_F$ may be arbitrary, then $f(G)$ attains its maximum over all $n$-vertex graphs on a complete multipartite graph.
The graphs $K_s^-=K_{2,1,\dots,1}$ are complete multipartite and have positive coefficients $\alpha_s$, while the graphs $K_r$ are complete graphs and have arbitrary coefficients $\beta_r$. Therefore the  function $f(G)$ is maximized by some complete multipartite graph.
It remains to show that, among complete multipartite graphs, a maximizer may be chosen balanced. Let the part sizes be
\[
n_1,\dots,n_q,\qquad n_i\ge 1,\qquad \sum_{i=1}^q n_i=n.
\]
Choose a maximizing complete multipartite graph with the minimum possible number of parts.
We claim that any two positive parts have sizes differing by at most one. Fix all parts except two of them, say $x$ and $y$, and write
\[
h=x+y,\qquad p=xy.
\]
We show that, after all other part sizes and $h$ are fixed, the whole functional is affine in $p$.

For the  clique terms this is immediate. If $E_j$ denotes the $j$th elementary symmetric polynomial in all part sizes except $x$ and $y$, then
\[
e_r=E_r+hE_{r-1}+pE_{r-2}.
\]
Since the number of copies of $K_r$ in a complete multipartite graph is $e_r(n_1,\dots,n_q)$, each clique term is affine in $p$.

Now consider an induced $K_s^-$-term. In a complete multipartite graph, an induced copy of $K_s^-=K_{2,1,\dots,1}$ is obtained by choosing two vertices from one part and one vertex from each of $s-2$ other parts. Hence its count is
\[
\sum_i \binom{n_i}{2} e_{s-2}(n_1,\dots,\widehat{n_i},\dots,n_q).
\]
The contribution from parts other than $x,y$ is affine in $p$, again by
\[
e_j=E_j+hE_{j-1}+pE_{j-2}.
\]
The contribution from the two special parts is
\[
\binom{x}{2}\left(E_{s-2}+yE_{s-3}\right)
+
\binom{y}{2}\left(E_{s-2}+xE_{s-3}\right),
\]
where now the $E_j$'s are taken over all parts other than $x,y$. Since
\[
\WAwl{\binom{x}{2}+\binom{y}{2} - ( \frac{(x+y)*(x+y-1)}{2}-x*y )}{
\binom{x}{2}+\binom{y}{2}
=
\frac{h(h-1)}2-p
}
\quad \quad 
\text{and} \quad \quad 
\WAwl{y*\binom{x}{2}+x*\binom{y}{2} -  (x+y-2)*x*y/2}{
y\binom{x}{2}+x\binom{y}{2}
=
\frac{p(h-2)}2,}
\]
this contribution is also affine in $p$. Hence the whole functional is affine in $p$.

Therefore, for fixed $h=x+y$, the functional has the form
\[
A+Bp
\]
as a function of $p=xy$. If $B<0$, then replacing $(x,y)$ by $(h,0)$ strictly increases the value, contradicting maximality. If $B=0$, then replacing $(x,y)$ by $(h,0)$ preserves the value and decreases the number of positive parts, contradicting the minimum choice of the support. Hence $B>0$.
Thus $p=xy$ must be as large as possible among integer pairs
\[
x,y\ge 0,\qquad x+y=h.
\]
This happens exactly when
\[
|x-y|\le 1.
\]
Since the pair $x,y$ was arbitrary, all positive part sizes differ by at most one. Therefore the complete multipartite maximizer is balanced, namely it is a $T_q(n)$ for some $q$.
\end{proof}

The local maxima are obtained from inequalities
\begin{equation}\label{eq:homogeneous-certificate}
\pind(K_t^-,W)
+\mu_R \pind(K_R,W)
+\mu_{R+1} \pind(K_{R+1},W)
\le
\lambda \pind(K_2,W),
\end{equation}
which we will prove holds for every graphon $W$ for some values of $t$ and suitable values $R$ and $\lambda\coloneqq \mu_R+\mu_{R+1}$.
These values or their numerical approximations are listed in Table~\ref{tab:homogeneous-certificates}.

Lemma~\ref{lem:balanced-reduction} will imply that we need to check \eqref{eq:homogeneous-certificate} only for complete multipartite graphons. 
To do that, we use the following definitions.
For integers $t\ge3$ and $q\ge1$,
let $P_{t,q}$ be the induced density of $K_t^-$ in the balanced
complete $q$-partite graphon $T_q$. 
It can be calculated as
\[
P_{t,q}\coloneqq \binom{t}{2}\frac{(q-1)_{t-2}}{q^{t-1}}
\leq \binom{t}{2}\frac{1}{q},
\]
where $(a)_b=a(a-1)\cdots(a-b+1)$, with the convention that
$(a)_b=0$ when $a$ is a nonnegative integer smaller than $b$.
We also write 
\[
C_r(q)\coloneqq \pind(K_r,T_q)=(q)_r/q^r
\quad\quad \text{ and } \quad\quad 
x_q\coloneqq \pind(K_2,T_q)=1-1/q.
\]
Put $A_r(q) \coloneqq x_q-C_r(q)$. For a row in Table~\ref{tab:homogeneous-certificates} with parameters $(t,k,R)$,
the coefficients $\mu_R$ and $\mu_{R+1}$ are the solution of
\begin{equation}\label{eq:exact-mu-system}
\begin{pmatrix}
A_R(k-1) & A_{R+1}(k-1)\\
A_R(k)   & A_{R+1}(k)
\end{pmatrix}
\begin{pmatrix}
\mu_R\\
\mu_{R+1}
\end{pmatrix}
=
\begin{pmatrix}
P_{t,k-1}\\
P_{t,k}
\end{pmatrix}.
\end{equation}
The accompanying exact computation verifies that this matrix is
nonsingular and that $\mu_R,\mu_{R+1}>0$ in every row. 
All verifications use the
exact rational solutions of \eqref{eq:exact-mu-system}. By construction,
\eqref{eq:homogeneous-certificate} is an equality for both $T_{k-1}$
and $T_k$.
The theorem below is a slightly more precise version of Theorem~\ref{thm:ktminus-local}.

\begin{theorem}\label{thm:larger-t}
For every $t=5+3j$ with $0\le j\le23$, the function $I_{K_t^-}(x)$
has a strict non-global local maximum at $x=1-1/k$, where $k$ is given
in Table~\ref{tab:homogeneous-certificates}.
\end{theorem}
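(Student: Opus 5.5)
The plan is to prove the certificate inequality \eqref{eq:homogeneous-certificate} for every graphon $W$, then use Reiher's clique density theorem to turn it into an upper bound on $I_{K_t^-}$ near $x=1-1/k$. That bound will be tight at $T_k$ and strictly smaller nearby.

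\textbf{Step 1: the certificate.} Rewrite \eqref{eq:homogeneous-certificate} as
\[
\pind(K_t^-,W)+\mu_R\bigl(\pind(K_R,W)-\pind(K_2,W)\bigr)+\mu_{R+1}\bigl(\pind(K_{R+1},W)-\pind(K_2,W)\bigr)\le 0 .
\]
Multiplying by $\binom{n}{t}$ and using that all graph counts are polynomial in $n$, this is a functional of the form covered by Lemma~\ref{lem:balanced-reduction} with positive coefficient on $K_t^-$. The clique coefficients are arbitrary, and $K_2$ is itself a clique. The counts have different orders, but normalizing by $n^{t}$, $n^R$, $n^2$ gives a fixed linear combination for each $n$. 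Taking limits, the supremum over graphons is attained on balanced complete multipartite graphons, either $T_q$ or limits of $T_{q(n)}$ with $q\to\infty$ (the latter treated as the limit point $q=\infty$).

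So it suffices to check
\[
g(q)\coloneqq P_{t,q}-\mu_R A_R(q)-\mu_{R+1}A_{R+1}(q)\le 0 \quad \text{for all integers } q\ge 1 \text{ and } q=\infty .
\]
By construction $g(k-1)=g(k)=0$. For small $q<R$ we have $C_R(q)=0$, so $A_R(q)=x_q$, and $P_{t,q}=0$ when $q<t-1$. For large $q$, $P_{t,q}\le\binom t2/q\to0$ while the $A_r(q)$ stay bounded below by positive constants. Hence only finitely many $q$ need checking, and this is an exact rational computation per table row. This verification is the main obstacle: one needs $g(q)<0$ for every $q\notin\{k-1,k\}$, with an explicit tail bound to make the check finite. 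I would bound $\mu_R A_R(q)+\mu_{R+1}A_{R+1}(q)$ from below by its value at some $q_0$ (monotonicity of $A_r$ in $q$ for $q\ge r$) and compare it with $\binom t2/q$.

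\textbf{Step 2: from the certificate to $I_{K_t^-}$.} For a graphon with edge density $x$, \eqref{eq:homogeneous-certificate} gives
\[
\pind(K_t^-,W)\le \lambda x-\mu_R\,\pind(K_R,W)-\mu_{R+1}\,\pind(K_{R+1},W).
\]
Since $\mu_R,\mu_{R+1}>0$, Reiher's clique density theorem lets us replace each clique density by its minimum $g_r(x)$ at edge density $x$. This yields $I_{K_t^-}(x)\le F(x)\coloneqq\lambda x-\mu_R g_R(x)-\mu_{R+1}g_{R+1}(x)$, with equality at $x=1-1/k$ realized by $T_k$.

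\textbf{Step 3: strict local maximum.} The Razborov--Nikiforov--Reiher minimum $g_r$ is strictly concave on each interval $[1-1/(m-1),1-1/m]$ and coincides with $C_r(m)$ at the endpoints. On $[1-1/(k-1),1-1/k]$, $F$ is convex and equals $P_{t,k-1}$ and $P_{t,k}$ at the endpoints. If the table guarantees $P_{t,k}>P_{t,k-1}$, then convexity puts $F<P_{t,k}$ just to the left of $1-1/k$. To the right, on $[1-1/k,1-1/(k+1)]$, we again compare with the chord, and need $F(1-1/(k+1))=\lambda x_{k+1}-\mu_RC_R(k+1)-\mu_{R+1}C_{R+1}(k+1)<P_{t,k}$; this is a finite rational check. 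Convexity then gives $F<P_{t,k}$ on a punctured neighborhood of $1-1/k$, which is a strict local maximum of $I_{K_t^-}$.

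It is non-global because the global maximum of $I_{K_t^-}$ exceeds $P_{t,k}$ for the listed $k$, for instance $P_{t,q^*}>P_{t,k}$ with $q^*$ the best Turán value. This is again checked exactly from the table.
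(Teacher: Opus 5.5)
\textbf{The left-hand half of your Step 3 fails.} To make the chord of the convex function $F$ on $[1-1/(k-1),1-1/k]$ lie below $P_{t,k}$, you need $P_{t,k}>P_{t,k-1}$. Table~\ref{tab:homogeneous-certificates} shows the reverse, $P_{t,k-1}>P_{t,k}$, in every row. That reverse inequality is also exactly what your last paragraph uses to show the maximum is non-global.

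With $F(x_{k-1})>F(x_k)$, convexity only places $F$ below a decreasing chord, and that chord lies above $P_{t,k}$ near $x_k$. For $x_k$ to be a local maximum, $F$ must dip strictly inside $(x_{k-1},x_k)$ and rise again. Endpoint values cannot detect this.

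The missing ingredient is local information at $x_k$, namely the one-sided derivatives of the clique-density minimum $g_r$. The paper gets $g_r'(x_k-)=\binom r2\frac{(k)_r}{(k-1)k^{r-1}}$ from the extremal complete $k$-partite graphon with $k-1$ equal parts and one part of size $z\uparrow 1/k$. Since $x'(1/k)=0$, this needs the second-order ratio $h_r''(1/k)/x''(1/k)$. The paper then checks $U'(x_k-)=\lambda-\mu_Rg_R'(x_k-)-\mu_{R+1}g_{R+1}'(x_k-)>0$ row by row.

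This is the binding condition. These values are tiny for large $t$ (e.g.\ $0.000657$ for $t=74$). The paper's remark notes that for $t=77$ a certificate with positive coefficients exists but the left derivative is negative, so the method breaks exactly here. An alternative within your framework would be to exhibit an interior point $y$ with $F(y)<P_{t,k}$, using the explicit $g_r(y)$, and then apply convexity on $[y,x_k]$. Either way, some computation with the explicit minimizer inside the interval is unavoidable.

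Your right-hand argument is a legitimate alternative to the paper's check $U'(x_k+)<0$. It compares with $F(1-1/(k+1))$ via convexity, and needs concavity of $g_r$ on $[x_k,x_{k+1}]$ plus the rational check.

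\textbf{The tail reduction in Step 1 rests on a false claim.} You have $A_r(q)=(1-1/q)-(q)_r/q^r=(\binom r2-1)/q+O(1/q^2)\to 0$. So the $A_r(q)$ are not bounded below by positive constants. Since $A_r$ is eventually decreasing, bounding it by its value at some $q_0$ gives nothing for $q>q_0$.

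In fact the certificate is also an equality at $W\equiv 1$, your ``$q=\infty$''. Large $q$ is therefore a regime where both sides agree to leading order, and you must compare the $1/q$ coefficients. The paper does this with the second Bonferroni inequality, $S(q)\ge \Lambda/q-D/q^2$, where $\Lambda=\mu_R\binom R2+\mu_{R+1}\binom{R+1}2-\lambda-\binom t2>0$ is verified per row. This reduces the check to $q<L=\max\{R+1,\lceil D/\Lambda\rceil\}$.

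Minor point: you only need $g(q)\le 0$, not strict inequality off $\{k-1,k\}$.
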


\subsection{Validity of \eqref{eq:homogeneous-certificate}}

In this subsection we show how we verify \eqref{eq:homogeneous-certificate} 
for fixed values of $t$ and $R$ from Table~\ref{tab:homogeneous-certificates}.
Notice $R \geq t-2$ in each row of the table.

For each $n$, Lemma~\ref{lem:balanced-reduction} implies that the graph $G_n$ maximizing
\begin{equation}\label{eq:homogeneous-I}
\frac{1}{\binom{n}{t}}I(K_t^-,G_n)
+\frac{\mu_R}{\binom{n}{R}} I(K_R,G_n)
+\frac{\mu_{R+1}}{\binom{n}{R+1}} I(K_{R+1},G_n)
-\frac{\lambda}{\binom{n}{2}} I(K_2,G_n)
\end{equation}
is complete multipartite.
Passing to graphons, it is therefore enough to verify
\eqref{eq:homogeneous-certificate} on the balanced complete
$q$-partite graphons for all $q$.
Thus we must prove
\[
P_{t,q}+\mu_R C_R(q)+\mu_{R+1}C_{R+1}(q)
- \lambda x_q \leq 0
\]
for every positive integer $q$. 
This is equivalent to showing
\[
S(q)\coloneqq 
\lambda\left(1-\frac1q\right)
-\mu_R\frac{(q)_R}{q^R}
-\mu_{R+1}\frac{(q)_{R+1}}{q^{R+1}}
-P_{t,q}
\]
is nonnegative.

For $L\in \mathbb{N}$ that we define later, we
check $S(q) \geq 0$ all $q < L$ individually. 
For $q \geq L$, we use the following argument.
For $r\ge1$, let
$B_r\coloneqq r(r-1)(3r^2-7r+2)/24$. The second Bonferroni inequality\footnote{For $0\le y_i\le1$, the second Bonferroni inequality gives
$\prod_i(1-y_i)\le
1-\sum_i y_i+\sum_{i<j}y_i y_j$.
We apply this with $y_i=i/q$ for $0\le i\le r-1$.}
gives, for $q\ge r$,
\[
\frac{(q)_r}{q^r}
=
\prod_{i=0}^{r-1}\left(1-\frac{i}{q}\right)
\le
1-\frac{\binom r2}{q}+\frac{B_r}{q^2}.
\]
This together with $P_{t,q}\le\binom t2/q$ gives a lower bound on $S(q)$.
\[
S(q) \geq \lambda - \mu_R -\mu_{R+1}
+ \frac{1}{q}\left(-\lambda + \mu_R\binom{R}{2} + \mu_{R+1}\binom{R+1}{2} - \binom{t}{2}   \right)
-\frac{1}{q^2}\left( \mu_RB_R+\mu_{R+1}B_{R+1} \right).
\]
To shorten the expression, define
\[
\Lambda\coloneqq 
\mu_R\binom R2+\mu_{R+1}\binom{R+1}{2}
-\lambda-\binom t2,
\qquad
D\coloneqq \mu_RB_R+\mu_{R+1}B_{R+1}.
\]
We checked $\Lambda>0$ for every row of Table~\ref{tab:homogeneous-certificates}.
Since
$\lambda=\mu_R+\mu_{R+1}$, we conclude
\[
S(q)\ge\frac{\Lambda}{q}-\frac{D}{q^2}\ge0
\quad\text{whenever}\quad
q\ge
L\coloneqq \max\left\{R+1,\left\lceil\frac{D}{\Lambda}\right\rceil\right\}.
\]

The accompanying SageMath code with exact arithmetic solves \eqref{eq:exact-mu-system}, computes $L$,
checks $\Lambda>0$ and 
verifies $S(q) \geq 0$ for all $q < L$.
This proves \eqref{eq:homogeneous-certificate} for every row of Table~\ref{tab:homogeneous-certificates}.

\subsection{Strict local maximum using derivatives}

Let $g_r(x)$ denote the minimum possible $K_r$-density among graphons
of edge density $x$. By definition, every graphon $W$ with
$\pind(K_2,W)=x$ satisfies
\[
\pind(K_r,W)\ge g_r(x).
\]
Since $\mu_R,\mu_{R+1}>0$, it follows from
\eqref{eq:homogeneous-certificate} that
\[
I_{K_t^-}(x)
\le
U(x)\coloneqq
\lambda x-\mu_Rg_R(x)-\mu_{R+1}g_{R+1}(x).
\]

The clique density theorem determines $g_R$ and $g_{R+1}$ near
$x_k\coloneqq1-1/k$. In particular, the balanced complete
$k$-partite graphon $T_k$ attains the minimum defining both
$g_R(x_k)$ and $g_{R+1}(x_k)$. Since \eqref{eq:homogeneous-certificate} is an equality
on $T_k$, we have
\[
P_{t,k}
=
\pind(K_t^-,T_k)
\le
I_{K_t^-}(x_k)
\le
U(x_k)
=
P_{t,k}.
\]
Therefore
\[
I_{K_t^-}(x_k)=P_{t,k}=U(x_k).
\]

We show that this point is a local maximum by checking one-sided derivatives.
We claim that the one-sided derivatives of the clique
density function at $x_k$ for $k\ge r$ are
\begin{equation} \label{equ::oneSideDeri}
g_r'(x_k-)
=
\binom r2\frac{(k)_r}{(k-1)k^{r-1}},
\qquad
g_r'(x_k+)
=
\binom r2\frac{(k)_{r-1}}{k^{r-1}}.
\end{equation}
To obtain the left derivative, consider the complete $k$-partite
graphon with $k-1$ equal parts of size $a=(1-z)/(k-1)$ and one part of
size $z$. 
Its edge density is
\[
x(z)=1-\frac{(1-z)^2}{k-1}-z^2.
\]
The corresponding $K_r$-density is
\[
h_r(z)=
r!\left[
\binom{k-1}{r}a^r+
\binom{k-1}{r-1}a^{r-1}z
\right].
\]
Then $x(1/k)=x_k$, $h_r(1/k)=g_r(x_k)$,
$
\WAwl{D[1-(1-z)^2/(k-1)-z^2, z] /. z -> 1/k}{x'(1/k)}=
\WAwl{D[r!*(Binomial[k-1,r]*((1-z)/(k-1))^r + Binomial[k-1,r-1]*((1-z)/(k-1))^(r-1)*z), z] /. z = 1/k
}{
h_r'(1/k)}=0$, and
\[
\WAwl{D[1-(1-z)^2/(k-1)-z^2,{z,2}] /. z -> 1/k}{
x''(1/k)=-\frac{2k}{k-1}.
}
\]
Differentiation gives
\[
\WAwl{D[( (k-r)/r  ((1-z)/(k-1))^r + ((1-z)/(k-1))^(r-1)*z), {z,2}]
/. z = 1/k (* r!binom(k-1,r-1) are factored out  *)}{
h_r''(1/k)
=
-\frac{r!\binom{k-1}{r-1}(r-1)}{(k-1)^2}\,k^{3-r}.
}
\]
Therefore
\[
g_r'(x_k-)
=
\frac{h_r''(1/k)}{x''(1/k)}
=
\binom r2\frac{(k)_r}{(k-1)k^{r-1}}.
\]

For the right derivative, let $b=(1-z)/k$ and consider the complete
$(k+1)$-partite graphon with $k$ parts of size $b$ and one part of
size $z$. Its edge density is
\[
x_+(z)=1-\frac{(1-z)^2}{k}-z^2,
\]
and the corresponding $K_r$-density is
\[
h_{r,+}(z)
=
r!\left[
\binom{k}{r}b^r+
\binom{k}{r-1}b^{r-1}z
\right].
\]
At $z=0$, we have $x_+(0)=x_k$, and
\[
\WAwl{D[1-(1-z)^2/k-z^2,z] /. z -> 0}{
x_+'(0)=\frac2k,}
\qquad
\WAwl{D[(k-r+1)/r*((1-z)/k)^r +((1-z)/k)^(r-1)*z, z]
/. z = 0 (* r!binom(k,r-1) are factored out  *)}{
h_{r,+}'(0)=
r!\binom{k}{r-1}(r-1)k^{-r}.}
\]
Thus
\[
g_r'(x_k+)
=
\frac{h_{r,+}'(0)}{x_+'(0)}
=
\binom r2\frac{(k)_{r-1}}{k^{r-1}}.
\]
This proves \eqref{equ::oneSideDeri}.

It follows that
\[
\begin{aligned}
U'(x_k-)
&=
\lambda-\mu_Rg_R'(x_k-)-\mu_{R+1}g_{R+1}'(x_k-),\\
U'(x_k+)
&=
\lambda-\mu_Rg_R'(x_k+)-\mu_{R+1}g_{R+1}'(x_k+).
\end{aligned}
\]
The last two columns of Table~\ref{tab:homogeneous-certificates} give
$U'(x_k-)$ and $U'(x_k+)$, respectively. Since the left one is positive and the right one is negative, $U$ has a strict local maximum at $x_k$. Since
$I_{K_t^-}(x)\le U(x)$ and equality holds at $x_k$, the same point is a
strict local maximum of $I_{K_t^-}$.

It remains to prove that this local maximum is not global.
This is implied by $P_{t,k-1}>P_{t,k}$.
It is possible to show $P_{t,k-1}$ is the global maximum. 

As in the previous subsection, the values $U'(x_k-), U'(x_k+), P_{t,k}$ and $P_{t,k-1}$ were calculated using SageMath. 
The code is available as part of the preprint submission to arXiv~\cite{balogh2026arXiv}.
This finishes the proof of Theorem~\ref{thm:larger-t}.

\begin{table}[htbp]
\centering
\scriptsize
\renewcommand{\arraystretch}{1.25}
\resizebox{\textwidth}{!}{
\begin{tabular}{c c c c c c c c c}
\hline
$t$ & $k$ & $R$
& $\mu_R$ & $\mu_{R+1}$
& \shortstack{local\\value $P_{t,k}$}
& \shortstack{global\\value $P_{t,k-1}$}
& $U'(x_k-)$
& $U'(x_k+)$\\
\hline
5 & 9 & 7 & 0.4148126 & 0.1807358 & 0.512117055 & 0.512695312 & 0.175798 & $-0.587776$ \\
8 & 26 & 14 & 0.0815432 & 0.3842764 & 0.444524450 & 0.444572526 & 0.112327 & $-0.254483$ \\
11 & 52 & 22 & 0.0056812 & 0.4239735 & 0.420045258 & 0.420055694 & 0.082887 & $-0.159456$ \\
14 & 87 & 31 & 0.1582415 & 0.2546399 & 0.407419674 & 0.407423132 & 0.064648 & $-0.117448$ \\
17 & 131 & 40 & 0.2043730 & 0.1988584 & 0.399716018 & 0.399717474 & 0.052380 & $-0.094800$ \\
20 & 184 & 49 & 0.1624330 & 0.2345426 & 0.394525349 & 0.394526064 & 0.043394 & $-0.080648$ \\
23 & 246 & 58 & 0.0556943 & 0.3369005 & 0.390790315 & 0.390790705 & 0.036512 & $-0.071025$ \\
26 & 317 & 68 & 0.2732301 & 0.1161276 & 0.387973825 & 0.387974056 & 0.031025 & $-0.064040$ \\
29 & 397 & 77 & 0.0563066 & 0.3305622 & 0.385774102 & 0.385774248 & 0.026565 & $-0.058770$ \\
32 & 486 & 87 & 0.1764739 & 0.2084221 & 0.384008525 & 0.384008621 & 0.022845 & $-0.054639$ \\
35 & 584 & 97 & 0.2616577 & 0.1216362 & 0.382560100 & 0.382560165 & 0.019706 & $-0.051328$ \\
38 & 691 & 107 & 0.3121614 & 0.0698058 & 0.381350405 & 0.381350452 & 0.017013 & $-0.048611$ \\
41 & 807 & 117 & 0.3298970 & 0.0509537 & 0.380324910 & 0.380324944 & 0.014675 & $-0.046339$ \\
44 & 932 & 127 & 0.3175937 & 0.0623042 & 0.379444515 & 0.379444541 & 0.012624 & $-0.044412$ \\
47 & 1066 & 137 & 0.2782951 & 0.1007803 & 0.378680459 & 0.378680478 & 0.010810 & $-0.042758$ \\
50 & 1209 & 147 & 0.2150595 & 0.1632988 & 0.378011107 & 0.378011122 & 0.009195 & $-0.041321$ \\
53 & 1361 & 157 & 0.1307928 & 0.2469346 & 0.377419880 & 0.377419892 & 0.007746 & $-0.040063$ \\
56 & 1522 & 167 & 0.0281667 & 0.3490014 & 0.376893854 & 0.376893863 & 0.006440 & $-0.038952$ \\
59 & 1692 & 178 & 0.2775564 & 0.0991126 & 0.376422805 & 0.376422812 & 0.005253 & $-0.037961$ \\
62 & 1871 & 188 & 0.1329862 & 0.2432345 & 0.375998543 & 0.375998549 & 0.004172 & $-0.037074$ \\
65 & 2059 & 199 & 0.3502831 & 0.0255327 & 0.375614426 & 0.375614431 & 0.003183 & $-0.036276$ \\
68 & 2256 & 209 & 0.1694067 & 0.2060418 & 0.375265014 & 0.375265018 & 0.002273 & $-0.035551$ \\
71 & 2462 & 220 & 0.3539798 & 0.0211339 & 0.374945807 & 0.374945810 & 0.001435 & $-0.034893$ \\
74 & 2677 & 230 & 0.1427880 & 0.2320192 & 0.374653049 & 0.374653052 & 0.000657 & $-0.034290$ \\
\hline
\end{tabular}
}
\caption{Coefficients used for showing $I_{K_t^-}$ has a non-global local maximum.
The local maximum is at $x_k=1-1/k$.
The exact rational coefficients are defined
by \eqref{eq:exact-mu-system}; only decimal approximations are displayed.
}
\label{tab:homogeneous-certificates}
\end{table}

{\bf Remark:}
The restriction $j\le23$ comes from the derivative condition rather
than from the verification of the graphon inequality. For the next
value, $t=77$, the corresponding candidate is $k=2901$, and the same
search produces a certificate using $K_{241}$ and $K_{242}$. Its two
coefficients are positive, but the left derivative of the resulting
upper envelope is negative.
Thus our method does not imply a local maximum at $2900/2901$.
We do not know whether $I_{K_{77}^-}$ has a non-global
local maximum.

\section{Acknowledgment}
The authors thank Dhruv Mubayi for introducing them to the problem during a workshop organized by Andrew Suk.
The authors thank 
Felix Christian Clemen, 
Misha Tyomkin,
Florian Pfender,
Michael Wigal,
and Bowen Li
for useful discussions on this topic. 

ChatGPT 5.5 was used by 
Luo for assistance with language editing and
with the preparation and debugging of some SageMath verification code. All
computations  were checked by 
at least  two of the 
authors. The
mathematical statements, and the final text were checked by 
all the 
authors.

\bibliographystyle{plainurl}
\bibliography{references}

\section*{Appendix A} 

The first proof of Theorem~\ref{lem:k} where $k$ is restricted to be an integer we obtained using flag algebras. 
We include it because it is interesting to see a parametric proof in flag algebras and one could prove stability of the extremal construction.

\begin{proof}[Proof of Theorem~\ref{lem:k} for integer $k$]
The upper bound comes from the following flag algebras argument.
We want to find a linear combination of flags $X$ such that $X \geq 0$ and 

\begin{align}
k^4 \cdot \Fe50{2 2 2 1 1 2 2 2 2 2} 
\leq
k^4 \cdot \Fe50{2 2 2 1 1 2 2 2 2 2} 
+X
\leq 15(k-1)(k-2).
\label{eqX}
\end{align}
In the case $k \geq 5$, we use
\begin{align*}
X &= 
\left(
x_1 \Fuuu222
+
x_2 \Fuuu221
+
x_3 \Fuuu112 
\right)
\Bigg(  1 \vc{} - k \Fuu1 \Bigg)
+\beta
\Bigg\llbracket
\Bigg( 
\Flllu112122
 - (k-1) 
 \Flllu111111
 \Bigg)^{2} 
\Bigg\rrbracket
+
\\
&~+
\Bigg\llbracket
\left( \Flllu222122, \Flllu222111, \Flllu221122  \right)
\begin{pmatrix}
  1      &   0  \\
  0      &   1  \\
  2-k    &  -1  
\end{pmatrix}
\begin{pmatrix}
  a      &   c  \\
  c      &   b  \\
\end{pmatrix}
\begin{pmatrix}
  1  &   0  & 2-k \\
  0  &   1  &  -1 \\
\end{pmatrix}
\left( \Flllu222122, \Flllu222111, \Flllu221122  \right)^T
\Bigg\rrbracket,
\end{align*}

where
\begin{align*}
a &= 15(2k^3 - 2k^2 + k - 2)/(k - 1)  &
x_1&= 15(k-1)(k-2) \\ 
b &= 15(k^4 - 3k^3 - k + 2) &
x_2 &= 5(k^4 - 3k^3 - 5k^2 + 12k - 4)/(k - 1) \\
c &= 15/2(k^4 + k^3 - 3k^2 + 4k - 4)/(k - 1) &
x_3 &= 5(2k^5 - 7k^4 - 9k^3 + 25k^2 - 21k + 14)/(k - 1)^2 \\
\beta &= 15(k - 2)/(k - 1).
\end{align*}

All coefficients (except $c$) need to be non-negative and $ab-c^2 \geq 0$ in order to create a positive semidefinite matrix. 
Roots of all the polynomials in the numerators are less than 4.
The semidefinite condition is equivalent to 
\[63 \, k^{6} - 378 \, k^{5} + 585 \, k^{4} - 522 \, k^{3} + 603 \, k^{2} - 324 \, k - 36 \geq 0.\] 
Since this polynomial has largest root $\approx4.113060$, we need $k \geq 5$.
To get rid of the fractions in coefficients of the following calculation, we multiply the equation by $4(k-1)^2$ and carry the calculation to prove \eqref{eqX} as
\begin{align}
4(k-1)^2\left(15(k-1)(k-2) -  k^4 \Fe50{2 2 2 1 1 2 2 2 2 2} 
-X \right) =  \sum_{F \in \mathcal{F}_5} c_{k,F} F \geq 0,     \label{eq:k5plus}
\end{align}
where $c_{k,F}$ are polynomials in $k$ that are non-negative for all $k \geq 4$.

The case $k=3$ is implied by the following upper bound $10/27$ on the global maximum of $I_{K_{2,2,1}}$. We  scale by 27 to have smaller fractions and obtain 
\begin{align*}
27 \, \Fe50{2 2 2 1 1 2 2 2 2 2} \leq &~   27 \, \Fe50{2 2 2 1 1 2 2 2 2 2}   
    + \frac{5}{18} \cdot  \Bigg(  6 \Fuuu111 - 2 \Fuuu122 + 3 \Fuuu222 \Bigg)  ^{2} 
\\
&+ 60\cdot  \left\llbracket 
    F  { 
\text{\tiny{
    $\begin{pmatrix}       4/3 &        2/5 &  -629/3000  & -441/1000   & -512/375       &  2/3   \\
          2/5 &       3/25 & -629/10000 & -1323/10000  &  -256/625      &   1/5 \\
  -629/3000 & -629/10000 &     53/100  &  -121/500  &   -47/300  & -629/6000 \\
  -441/1000 &-1323/10000 &   -121/500  &    59/100  &    93/500  & -441/2000 \\
   -512/375 &   -256/625 &    -47/300  &    93/500  &   334/125  &  -256/375 \\
        2/3 &        1/5 &  -629/6000  & -441/2000  &  -256/375  &       1/3 
 \end{pmatrix}$}}}  F^T \right\rrbracket  
 \leq 10,
\end{align*} 
 
\noindent where the $6 \times 6$ matrix is positive semidefinite and 
\[
F = \begin{pmatrix}  \Flluu211221,  &   \Flluu222222,  &   \Flluu212222,  &   \Flluu222122,  &   \Flluu222221,  &   \Flluu212212 \end{pmatrix}.
\]

The case $k=4$ is implied by the following upper bound $45/128$ on $I_{K_{2,2,1}}(3/4)$. We scale by 128 to have smaller numbers in the denominators of the fractions and obtain 

\begin{align*}
128 \Fe50{1 2 2 2 2 2 2 1 2 2} \leq&~     
128 \Fe50{1 2 2 2 2 2 2 1 2 2} +  \Bigg( 190 \Fe30{1 2 2}  - 60 \Fe30{1 1 1} \Bigg) \times  \Bigg(  \Fe20{2} - \frac{3}{4} \Bigg) \\
&~
+ \frac{15}{2}\cdot  \left\llbracket F  \begin{pmatrix} 91  &  12  &  -115  \\ 
 12  &  41  &  -94  \\ 
 -115  &  -94  &  303 \end{pmatrix}  F^T\right\rrbracket \leq 45,
\end{align*}
where the $3 \times 3$ matrix is positive semidefinite and 
\[
F = \begin{pmatrix}  \Fe43{1 2 1 2 1 2},  &  \Fe43{1 2 2 2 2 2},  &  \Fe43{1 2 2 2 2 1} \end{pmatrix}.
\]
\end{proof}

\section*{Appendix B} 

The expansion of $128 \sum_{F \in \mathcal{F}_6} c_F F$ from Lemma~\ref{lem:0.74}, where numbers $c_F$ were rounded up to the 3 most significant digits after the decimal point.

$
-98.737 \Fe60{1 1 1 1 1 1 1 1 2 1 1 2 1 2 2}
-66.502 \Fe60{1 1 1 1 2 1 1 1 2 1 1 2 2 1 2}
-56.616 \Fe60{1 1 1 2 2 1 1 2 2 2 1 1 2 2 1}
-55.942 \Fe60{1 1 1 1 2 1 1 2 2 2 1 2 1 2 2}
-52.717 \Fe60{1 1 1 2 2 1 2 2 2 2 2 2 1 1 1}
-51.458 \Fe60{1 2 2 2 2 2 2 2 2 2 2 2 2 2 2}\\
-50.672 \Fe60{1 1 1 2 2 1 2 2 2 2 2 2 1 1 2}
-44.419 \Fe60{1 1 1 2 2 1 1 2 2 2 1 2 1 2 1}
-42.457 \Fe60{1 1 1 1 2 1 1 2 1 2 1 2 1 2 2}
-42.249 \Fe60{1 1 1 2 2 1 2 1 2 2 2 2 1 1 1}
-41.927 \Fe60{1 1 1 2 2 1 1 2 2 2 1 2 1 2 2}
-40.182 \Fe60{1 1 1 1 1 1 1 1 2 1 1 2 2 2 2}\\
-35.138 \Fe60{1 1 1 1 2 1 1 1 2 1 2 1 2 1 2}
-34.914 \Fe60{1 1 1 1 2 1 1 2 1 2 1 2 2 1 2}
-29.75 \Fe60{1 1 1 2 2 1 1 2 2 2 1 1 2 2 2}
-29.531 \Fe60{1 1 1 1 2 1 1 1 2 1 2 2 2 2 1}
-23.452 \Fe60{1 1 1 2 2 1 1 2 2 2 1 1 1 2 1}
-23.252 \Fe60{1 1 2 2 2 2 2 2 2 2 2 2 2 2 2}\\
-22.372 \Fe60{1 1 1 1 1 1 1 2 2 1 2 2 2 2 1}
-21.896 \Fe60{1 1 1 2 2 1 2 1 2 2 2 2 1 1 2}
-20.241 \Fe60{1 1 1 1 1 1 1 2 2 1 2 2 2 2 2}
-19.231 \Fe60{1 1 1 1 2 1 1 1 2 1 1 2 2 2 2}
-16.591 \Fe60{1 1 1 1 2 1 1 2 2 2 1 2 2 2 2}
-15.952 \Fe60{1 1 1 1 2 1 1 2 1 2 1 1 1 2 2}\\
-15.243 \Fe60{1 1 1 2 2 1 2 1 2 2 2 2 1 2 2}
-14.011 \Fe60{1 1 1 1 1 1 1 1 2 1 1 2 2 1 2}
-13.43 \Fe60{1 1 1 1 2 1 1 1 2 1 2 2 2 2 2}
-13.333 \Fe60{1 1 1 1 2 1 1 2 1 2 1 2 2 2 2}
-12.757 \Fe60{1 1 1 2 2 1 1 2 2 2 1 1 1 2 2}
-12.498 \Fe60{1 1 1 1 1 1 1 1 2 1 2 2 2 2 1}\\
-11.519 \Fe60{1 1 1 1 2 1 2 2 1 2 2 2 1 2 2}
-10.583 \Fe60{1 1 1 2 2 2 2 1 1 2 1 1 1 1 2}
-10.543 \Fe60{1 1 1 1 1 1 1 1 1 1 1 2 1 2 2}
-8.661 \Fe60{1 1 1 2 2 2 2 1 1 2 1 1 1 2 2}
-8.561 \Fe60{1 1 1 1 1 1 1 1 2 1 2 2 2 2 2}
-7.628 \Fe60{1 1 1 1 2 1 1 1 2 1 2 1 2 2 2}\\
-5.305 \Fe60{1 1 2 2 2 1 2 2 2 2 2 2 1 1 2}
-4.925 \Fe60{1 1 1 1 2 1 1 2 2 2 1 2 2 1 2}
-4.025 \Fe60{1 1 1 2 2 2 2 1 1 2 1 2 1 2 2}
-3.982 \Fe60{1 1 2 2 2 2 1 2 2 2 1 2 2 1 2}
-1.883 \Fe60{1 1 1 2 2 2 2 1 2 2 2 1 2 2 1}
-0.965 \Fe60{1 1 1 2 2 1 1 2 2 2 1 2 2 2 2}\\
-0.269 \Fe60{1 1 1 2 2 2 2 2 2 2 2 2 2 2 2}
+0.08 \Fe60{1 1 1 1 2 1 2 2 2 2 2 2 1 2 2}
+0.46 \Fe60{1 1 1 1 1 1 1 2 2 2 1 2 1 2 2}
+0.797 \Fe60{1 1 1 2 2 2 2 1 1 2 1 2 2 2 2}
+1.029 \Fe60{1 1 1 1 2 1 2 2 1 2 2 2 1 1 2}
+1.059 \Fe60{1 1 1 2 2 2 2 1 1 2 1 2 2 1 2}\\
+2.199 \Fe60{1 1 1 1 2 1 1 2 2 2 2 1 2 1 2}
+2.991 \Fe60{1 1 1 1 2 1 1 2 1 2 1 1 2 2 2}
+3.381 \Fe60{1 1 1 2 2 2 2 1 2 2 1 2 1 2 2}
+3.414 \Fe60{1 1 1 2 2 1 2 1 2 2 2 1 1 1 2}
+3.842 \Fe60{1 1 1 2 2 1 1 2 2 2 1 2 2 2 1}
+4.335 \Fe60{1 1 1 2 2 2 2 1 2 2 1 2 2 2 2}\\
+4.598 \Fe60{1 1 1 2 2 1 1 2 2 2 1 1 1 1 1}
+5.284 \Fe60{1 1 1 2 2 1 2 2 2 2 2 2 1 2 1}
+6.122 \Fe60{1 1 1 1 2 1 1 2 2 2 2 1 2 1 1}
+6.735 \Fe60{1 1 1 2 2 2 2 1 2 2 1 2 2 1 1}
+6.767 \Fe60{1 1 1 2 2 1 2 1 2 2 2 2 2 1 2}
+8.629 \Fe60{1 1 1 1 1 1 1 1 1 1 2 2 2 2 1}\\
+8.863 \Fe60{1 1 1 1 2 1 2 2 1 2 2 2 2 1 1}
+8.883 \Fe60{1 1 1 1 2 1 1 2 1 2 1 2 2 2 1}
+9.158 \Fe60{1 1 1 2 2 1 2 1 2 2 2 1 1 2 2}
+9.344 \Fe60{1 1 1 2 2 2 2 1 2 2 2 2 2 2 1}
+9.693 \Fe60{1 1 1 2 2 1 2 1 2 2 2 1 2 2 2}
+10.163 \Fe60{1 1 1 1 2 1 2 2 1 2 2 2 2 1 2}\\
+10.405 \Fe60{1 1 1 2 2 2 2 1 2 2 1 2 2 1 2}
+10.602 \Fe60{1 1 2 2 2 2 1 2 2 2 1 2 1 2 2}
+10.686 \Fe60{1 1 1 1 2 1 1 1 2 1 1 2 2 1 1}
+10.959 \Fe60{1 1 2 2 2 2 1 2 2 2 2 2 2 2 1}
+11.007 \Fe60{1 1 1 1 2 1 1 2 1 2 1 2 2 1 1}
+11.017 \Fe60{1 1 1 2 2 2 2 1 1 2 2 2 2 2 2}\\
+11.172 \Fe60{1 1 1 2 2 2 2 1 2 2 2 2 2 2 2}
+12.499 \Fe60{1 1 1 1 2 1 1 1 2 2 2 1 2 1 1}
+12.596 \Fe60{1 1 1 2 2 1 2 2 2 2 2 2 1 2 2}
+12.682 \Fe60{1 1 1 1 1 1 1 1 2 2 2 1 2 1 2}
+13.161 \Fe60{1 1 1 1 1 1 1 2 2 2 1 2 2 2 2}
+13.3 \Fe60{1 1 1 1 1 1 1 1 2 1 2 1 2 2 2}\\
+13.374 \Fe60{1 1 1 1 2 1 1 1 2 2 2 1 2 1 2}
+13.404 \Fe60{1 1 1 1 2 1 1 2 2 1 2 2 2 2 2}
+13.94 \Fe60{1 1 1 1 2 1 1 1 2 1 2 1 2 2 1}
+13.997 \Fe60{1 1 1 1 2 1 1 2 1 2 1 2 1 2 1}
+14.455 \Fe60{1 1 2 2 2 2 1 2 2 2 2 2 2 2 2}
+14.818 \Fe60{1 1 2 2 2 2 2 2 2 2 2 2 1 1 2}\\
+15.091 \Fe60{1 1 1 1 1 1 1 1 2 1 2 1 2 2 1}
+15.202 \Fe60{1 1 1 1 1 1 1 1 2 2 2 1 2 2 2}
+15.558 \Fe60{1 1 1 1 2 2 2 2 1 2 2 1 2 2 2}
+15.613 \Fe60{1 1 1 2 2 2 2 1 2 2 1 2 2 2 1}
+15.758 \Fe60{1 1 1 2 2 1 2 1 2 2 2 2 2 1 1}
+15.904 \Fe60{1 1 1 1 2 1 1 2 2 2 1 2 2 1 1}\\
+16.656 \Fe60{1 1 1 1 1 1 1 2 2 2 1 2 2 1 2}
+16.94 \Fe60{1 1 1 1 2 1 1 2 1 2 2 2 2 2 1}
+17.013 \Fe60{1 1 2 2 2 2 1 2 2 1 2 2 2 2 2}
+17.093 \Fe60{1 1 1 2 2 1 1 2 2 2 1 1 1 1 2}
+17.669 \Fe60{1 1 1 1 2 1 1 1 2 1 2 1 2 1 1}
+17.926 \Fe60{1 1 1 1 2 1 1 2 2 1 2 2 2 2 1}\\
+17.932 \Fe60{1 1 1 1 2 1 1 2 2 2 2 1 2 2 2}
+17.957 \Fe60{1 1 1 2 2 1 1 2 2 2 1 2 2 1 2}
+18.062 \Fe60{1 1 1 2 2 1 1 2 2 2 1 2 2 1 1}
+18.345 \Fe60{1 1 1 1 1 1 1 2 2 2 1 2 2 1 1}
+19.753 \Fe60{1 1 1 1 2 1 1 2 2 2 2 1 2 2 1}
+19.949 \Fe60{1 1 1 1 2 2 2 2 1 2 2 2 2 2 2}\\
+20.701 \Fe60{1 1 1 1 1 1 1 1 1 1 1 2 2 1 2}
+20.724 \Fe60{1 1 1 1 2 1 1 2 1 2 1 1 1 1 2}
+20.778 \Fe60{1 1 1 1 2 2 2 2 1 2 2 1 2 1 2}
+21.371 \Fe60{1 1 2 2 2 2 1 2 2 1 2 2 2 2 1}
+21.559 \Fe60{1 1 1 2 2 2 2 1 2 2 2 1 2 2 2}
+22.321 \Fe60{1 1 1 1 1 1 1 1 1 1 1 2 2 2 2}\\
+22.379 \Fe60{1 1 1 1 1 1 1 1 1 1 2 2 2 2 2}
+22.937 \Fe60{1 1 2 2 2 2 1 2 2 2 1 2 2 1 1}
+23.158 \Fe60{1 1 1 2 2 1 2 2 2 2 2 2 2 2 1}
+23.509 \Fe60{1 1 1 1 2 1 2 2 2 2 2 2 2 1 1}
+23.663 \Fe60{1 1 1 1 2 1 1 1 2 2 2 1 2 2 2}
+24.161 \Fe60{1 1 1 1 2 1 2 2 1 2 2 2 2 2 2}\\
+25.148 \Fe60{1 1 1 1 1 1 1 2 2 2 2 2 2 2 1}
+25.422 \Fe60{1 1 1 1 1 1 1 1 2 1 1 2 2 1 1}
+26.423 \Fe60{1 1 1 1 1 1 1 1 2 2 2 1 2 1 1}
+26.605 \Fe60{1 1 2 2 2 2 1 2 2 2 1 2 2 2 2}
+27.925 \Fe60{1 1 1 1 2 1 1 2 1 1 2 2 2 2 1}
+28.355 \Fe60{1 1 1 2 2 1 2 1 2 2 2 2 2 2 2}\\
+28.732 \Fe60{1 1 1 2 2 2 2 2 2 2 2 2 2 2 1}
+28.805 \Fe60{1 1 1 1 2 1 1 2 1 2 2 2 2 2 2}
+30.062 \Fe60{1 1 1 1 2 1 1 2 1 1 2 2 2 2 2}
+30.222 \Fe60{1 1 1 1 1 1 1 2 2 2 2 2 2 2 2}
+31.883 \Fe60{1 1 1 1 1 1 1 1 1 1 1 1 1 2 2}
+33.594 \Fe60{1 1 1 1 2 1 1 2 2 2 2 2 2 2 1}\\
+33.791 \Fe60{1 1 1 1 2 2 2 2 2 2 2 2 2 2 2}
+33.824 \Fe60{1 1 1 1 1 1 1 1 2 2 2 2 2 2 2}
+33.98 \Fe60{1 1 1 1 1 1 2 2 2 2 2 2 2 2 2}
+35.38 \Fe60{1 1 1 1 2 1 1 2 2 2 2 2 2 2 2}
+35.764 \Fe60{1 1 1 1 2 2 2 2 1 2 2 1 2 1 1}
+37.379 \Fe60{1 1 1 1 2 1 2 2 2 2 2 2 2 2 2}\\
+38.746 \Fe60{1 1 1 1 1 1 1 1 1 1 1 2 2 1 1}
+41.425 \Fe60{1 1 1 1 1 1 2 2 2 2 2 2 1 2 2}
+42.666 \Fe60{1 1 1 2 2 1 1 2 2 2 2 2 2 2 1}
+44.026 \Fe60{1 1 1 1 2 1 1 2 1 2 1 1 1 1 1}
+44.603 \Fe60{1 1 1 2 2 1 2 2 2 2 2 2 2 2 2}
+44.898 \Fe60{1 1 1 1 1 1 1 1 1 1 1 1 1 1 1}\\
+44.907 \Fe60{1 1 1 1 1 1 1 1 1 1 1 1 1 1 2}
+44.911 \Fe60{1 1 2 2 2 1 2 2 2 2 2 2 1 1 1}
+44.912 \Fe60{2 2 2 2 2 2 2 2 2 2 2 2 2 2 2}
+44.925 \Fe60{1 1 1 1 1 1 1 1 1 1 1 1 2 2 2}
+44.926 \Fe60{1 2 2 2 2 2 2 2 2 1 2 2 2 2 2}
+44.927 \Fe60{1 1 1 2 2 1 1 2 2 1 2 2 2 2 1}\\
+44.927 \Fe60{1 1 1 1 2 1 1 1 2 2 2 2 2 2 2}
+44.929 \Fe60{1 2 2 2 2 2 2 2 2 1 2 2 2 2 1}
+44.934 \Fe60{1 1 2 2 2 2 2 2 2 2 2 2 1 2 2}
+44.934 \Fe60{1 1 1 1 2 1 2 2 2 2 2 2 2 1 2}
+44.935 \Fe60{1 1 2 2 2 1 2 2 2 2 2 2 2 2 2}
+44.935 \Fe60{1 1 1 1 2 1 1 1 2 1 1 2 1 2 2}\\
+44.94 \Fe60{1 1 1 2 2 1 1 2 2 1 2 2 2 2 2}
+44.94 \Fe60{1 1 1 2 2 1 1 2 2 2 2 2 2 2 2}
+44.94 \Fe60{1 1 1 1 1 2 2 2 2 2 2 2 2 2 2}
+44.943 \Fe60{1 1 1 1 1 1 1 1 1 2 2 2 2 2 2}
+44.947 \Fe60{1 1 2 2 2 1 2 2 2 2 2 2 1 2 2}
$

\section*{Appendix C}

\section{\texorpdfstring{Expansion of \eqref{eq:k5plus}}{Expansion of ckf}}

The expansion of \eqref{eq:k5plus} as 
$ \sum_{F \in \mathcal{F}_5} c_{k,F} F$ is

$
4(k-1)^2\left(15(k-1)(k-2) -  k^4 \Fe50{2 2 2 1 1 2 2 2 2 2} 
-X \right)
=\\
~( 12 \, k^{6} - 54 \, k^{5} + 42 \, k^{4} - 66 \, k^{3} + 210 \, k^{2} - 168 \, k + 24 )\,\, \Fe50{1 1 1 1 1 1 1 1 1 2} \quad[ 3.67637881255240 ] \\  
+( 18 \, k^{6} - 82 \, k^{5} + 48 \, k^{4} + 10 \, k^{3} + 106 \, k^{2} - 100 \, k )\,\, \Fe50{1 1 1 1 1 1 1 1 2 2} \quad[ 3.71057759546278 ] \\  
+( 18 \, k^{6} - 84 \, k^{5} + 60 \, k^{4} + 24 \, k^{3} + 42 \, k^{2} - 60 \, k )\,\, \Fe50{1 1 1 1 1 1 2 1 2 2} \quad[ 3.61153242453219 ] \\  
+( 6 \, k^{6} - 34 \, k^{5} + 62 \, k^{4} - 16 \, k^{3} + 38 \, k^{2} - 80 \, k + 8 )\,\, \Fe50{1 1 1 2 1 1 2 2 2 2} \quad[ 1.19111047336275 ] \\  
+( 14 \, k^{6} - 76 \, k^{5} + 100 \, k^{4} - 10 \, k^{3} + 24 \, k^{2} - 44 \, k - 24 )\,\, \Fe50{1 1 1 2 1 1 2 2 1 2} \quad[ 3.33334307995148 ] \\  
+( 16 \, k^{6} - 82 \, k^{5} + 84 \, k^{4} + 16 \, k^{3} + 32 \, k^{2} - 46 \, k - 36 )\,\, \Fe50{1 1 1 1 1 1 2 2 1 2} \quad[ 3.53889649274770 ] \\  
+( 16 \, k^{6} - 80 \, k^{5} + 72 \, k^{4} + 8 \, k^{3} + 72 \, k^{2} - 56 \, k - 48 )\,\, \Fe50{1 1 1 1 1 1 2 2 1 1} \quad[ 3.66913479985175 ] \\  
+( 20 \, k^{6} - 100 \, k^{5} + 76 \, k^{4} + 80 \, k^{3} - 54 \, k^{2} + 46 \, k - 84 )\,\, \Fe50{1 1 1 2 1 1 2 2 1 1} \quad[ 3.74585343172358 ] \\  
+( 16 \, k^{6} - 76 \, k^{5} + 48 \, k^{4} + 100 \, k^{3} - 64 \, k^{2} + 8 \, k - 48 )\,\, \Fe50{1 1 1 1 1 1 2 2 2 2} \quad[ 3.46001458402459 ] \\  
+( 24 \, k^{6} - 102 \, k^{5} + 192 \, k^{3} - 108 \, k^{2} + 54 \, k - 60 )\,\, \Fe50{1 1 1 1 1 1 1 2 2 2} \quad[ 3.76112866206846 ] \\  
+( 18 \, k^{6} - 94 \, k^{5} + 92 \, k^{4} + 54 \, k^{3} - 62 \, k^{2} + 48 \, k - 72 )\,\, \Fe50{1 1 1 2 1 2 1 2 2 1} \quad[ 3.66599104818950 ] \\  
+( 12 \, k^{6} - 62 \, k^{5} + 68 \, k^{4} + 40 \, k^{3} - 40 \, k^{2} + 2 \, k - 36 )\,\, \Fe50{1 1 1 2 1 2 1 2 2 2} \quad[ 3.13907463335862 ] \\  
+( 13 \, k^{6} - 74 \, k^{5} + 130 \, k^{4} - 63 \, k^{3} + 34 \, k^{2} - 40 \, k - 16 )\,\, \Fe50{1 1 1 2 1 2 2 2 2 1} \quad[ 1.19634329488937 ] \\  
+( 4 \, k^{6} - 23 \, k^{5} + 48 \, k^{4} - 4 \, k^{3} + 7 \, k^{2} - 44 \, k - 4 )\,\, \Fe50{1 1 1 2 1 2 2 2 2 2} \quad[ 1.19430945100174 ] \\  
+( 8 \, k^{6} - 56 \, k^{5} + 132 \, k^{4} - 76 \, k^{3} + 76 \, k^{2} - 92 \, k - 24 )\,\, \Fe50{1 1 1 1 1 2 2 2 2 1} \quad[ 1.19067676483104 ] \\  
+( 8 \, k^{6} - 42 \, k^{5} + 48 \, k^{4} + 84 \, k^{3} - 76 \, k^{2} + 6 \, k - 60 )\,\, \Fe50{1 1 1 1 1 2 2 2 2 2} \quad[ 1.19091776236183 ] \\  
+( 20 \, k^{6} - 100 \, k^{5} + 80 \, k^{4} + 60 \, k^{3} - 90 \, k^{2} + 90 \, k - 60 )\,\, \Fe50{1 1 2 2 2 1 2 2 1 1} \quad[ 3.80578666654232 ] \\  
+( 144 \, k^{3} - 144 \, k^{2} + 48 \, k - 96 )\,\, \Fe50{1 1 1 1 2 2 2 2 2 2} \quad[ 1.19042719688608 ] \\  
+( 24 \, k^{6} - 120 \, k^{5} + 72 \, k^{4} + 192 \, k^{3} - 216 \, k^{2} + 168 \, k - 144 )\,\, \Fe50{1 1 1 2 2 2 1 2 1 1} \quad[ 3.79093801524022 ] \\  
+( 16 \, k^{6} - 88 \, k^{5} + 100 \, k^{4} + 68 \, k^{3} - 106 \, k^{2} + 70 \, k - 84 )\,\, \Fe50{1 1 1 2 2 2 1 2 1 2} \quad[ 3.50666702108920 ] \\  
+( 16 \, k^{4} - 8 \, k^{3} - 16 \, k )\,\, \Fe50{1 1 2 2 2 2 2 2 2 2} \quad[ 1.19742933693303 ] \\  
+( 4 \, k^{6} - 28 \, k^{5} + 72 \, k^{4} - 32 \, k^{3} + 2 \, k^{2} - 22 \, k - 12 )\,\, \Fe50{1 1 2 2 2 2 1 2 2 2} \quad[ 1.19318246797484 ] \\  
+( 8 \, k^{6} - 56 \, k^{5} + 120 \, k^{4} - 16 \, k^{3} - 32 \, k^{2} - 8 \, k - 48 )\,\, \Fe50{1 1 2 2 2 2 1 2 1 2} \quad[ 1.19035619811791 ] \\  
+( 12 \, k^{6} - 68 \, k^{5} + 100 \, k^{4} - 24 \, k^{3} - 16 \, k^{2} + 12 \, k - 24 )\,\, \Fe50{1 1 2 2 2 2 1 2 2 1} \quad[ 3.44702691464773 ] \\  
+( 8 \, k^{6} - 48 \, k^{5} + 80 \, k^{4} + 20 \, k^{3} - 52 \, k^{2} + 16 \, k - 48 )\,\, \Fe50{1 1 1 2 2 2 1 2 2 2} \quad[ 1.19195878318966 ] \\  
+( 12 \, k^{4} + 48 \, k^{3} - 54 \, k^{2} + 6 \, k - 36 )\,\, \Fe50{1 1 1 2 2 2 2 2 2 2} \quad[ 1.19232007415003 ] \\  
\\
+ 0 \left( 
\Fe50{2 2 2 1 1 2 2 2 2 2}
+ \Fe50{2 2 2 2 2 2 2 2 2 2}
+ \Fe50{2 2 2 1 2 2 2 2 2 2}
+ \Fe50{1 1 2 2 2 2 2 2 2 1}
+ \Fe50{1 1 1 1 1 1 1 1 1 1}
+ \Fe50{1 1 1 2 1 1 2 1 2 2}
+ \Fe50{1 1 2 2 1 2 2 2 2 1}
+ \Fe50{1 1 2 2 1 2 2 2 2 2}
\right),
$\\

\noindent where the numbers in the square brackets are the largest roots of the polynomials on respective lines computed by SageMath.
Since the largest root is less than 4, all coefficients are non-negative for $k \geq 4$.
Notice that the coefficient is 0 for subgraphs of the extremal construction. 
This must be the case and it serves as a sanity check.

\end{document}